\newtheorem{theorem}{Theorem}[section]
\newtheorem{lemma}[theorem]{Lemma}
\newtheorem{definition}[theorem]{Definition}
\newtheorem{remark}[theorem]{Remark}
\def\cB{\mathcal{B}}
\def\cD{\mathcal{D}}
\def\cE{\mathcal{E}}
\def\cF{\mathcal{F}}
\def\cH{\mathcal{H}}
\def\cL{\mathcal{L}}
\def\cN{\mathcal{N}}
\def\cS{\mathcal{S}}
\def\cU{\mathcal{U}}
\def\bD{\mathbb{D}}
\def\bR{\mathbb{R}}
\begin{document}

\title{A gentle introduction to SPDEs: \\
the random field approach}

\author{Raluca M. Balan\footnote{ University of Ottawa, Department of Mathematics and Statistics, STEM Building, 150 Louis-Pasteur Private, Ottawa, Ontario, K1N 6N5, Canada. E-mail
address: rbalan@uottawa.ca}\\University of Ottawa}

\date{December 6, 2018}
\maketitle

\begin{abstract}
\noindent These notes constitute the basis for the lectures given by the author at Centre de recherches math\'ematiques (CRM) at Universit\'e de Montreal, as part of the thematic semester on ``Mathematical challenges in many-body physics and quantum information'' (September-December 2018). They are intended for researchers in mathematics who have a background (and an interest) in probability theory, but may not be familiar with the area of stochastic analysis, and in particular with stochastic partial differential equations (SPDEs). Their goal is to give a brief and concise introduction to the study of SPDEs using the random field approach, an area which has been expanding rapidly in the last 30 years, after the publication of John Walsh's lecture notes \cite{walsh86}. These notes do not  survey all the developments in this area, but have the rather modest goal of introducing the readers to the basic ideas, and (hopefully) spark their interest to learn more about this subject.
\end{abstract}

\noindent {\em MSC 2010:} Primary 60H15; Secondary 37H15

\vspace{1mm}

\noindent {\em Keywords:} It\^o integral, space-time white noise, fractional Brownian motion, Malliavin calculus, stochastic partial differential equations

\newpage

\tableofcontents

\section{Introduction}

Mathematicians with a taste for history often wonder what were the most influential ideas in mathematics in the 20th century. In these notes, we present arguments to support the claim that It\^o's construction of the stochastic integral with respect to the Brownian motion should probably be on the list of such important ideas. To justify this claim in just one sentence, we say that It\^o's construction lead to the development of stochastic analysis, a field which has connections with many other areas in mathematics (for instance partial differential equations via Feynman-Kac formulae, functional analysis via Malliavin calculus), is used in a variety of applications (most notably in finance), and now includes as a youngest offspring the area of stochastic partial differential equations (SPDEs), which is of interest to some theoretical physicists.

These notes are organized as follows.

In Section \ref{section-ito}, we review It\^o's construction of the stochastic integral with respect to Brownian motion, and we show two different methods for solving some simple stochastic differential equation (SDE), using Picard's iterations, respectively series expansions. A very readable account of stochastic analysis with respect to Brownian motion can be found in \cite{kuo}.

In Section \ref{section-Walsh}, we explain Walsh's method introduced in his 1984 lectures notes \cite{walsh86} at the Saint Flour summer school, for extending It\^o's construction to higher-dimensions, i.e. to space-time white noise. Random fields (or multi-parameter processes) such as Brownian sheet which are at the core of this approach  were the focus of many investigations in the 1980's. There exist other approaches to SPDEs in the literature: the infinite-dimensional approach of Da Prato and Zabczyk (see \cite{daprato-zabczyk92}) and the analytic approach of Krylov (see \cite{krylov99}). Each of these approaches has been fruitful in its own ways. In these notes, we focus only on the random field approach, which we believe is closer to It\^o's original ideas, and has the advantage that allows us to investigate the probabilistic behavior of the solution, simultaneously in time and space. Expository lectures introducing the random field approach can be found in \cite{dalang-course} and \cite{khoshnevisan14}, whereas \cite{dalang-quer11} gives a comparison between the random field and the infinite-dimensional approach.  On the other hand, the random field approach uses tools which deviate significantly from those used for classical PDEs.
In a landmark paper \cite{dalang99},  Dalang provided the tools for analyzing a large class of SPDEs with very general noise.
We illustrate how this method works for a simple SPDE, in the linear and non-linear case. We conclude this section with the Parabolic Anderson model with space-time white noise, which is of great interest to physicists. We show that for this model, the existence of the solution can be proved using a series expansion.

In Section \ref{section-fBm}, we give some historical remarks related to the fractional Brownian motion (fBm) with Hurst index $H$ and we discuss some of the difficulties encountered in the stochastic analysis with respect to this process. We focus only on the case $H>1/2$. We present some integration techniques, which are essentially the same as for the It\^o's integral in the case of deterministic integrands, and are borrowed from Malliavin calculus in the case of random integrands. For this material, we drew heavily from Nualart's influential survey article \cite{nualart03} on fBm.
We mention an open problem related to the existence of solution to a SDE driven by fBm (using Malliavin calculus). We examine a simple SDE for which the existence of the solution can be proved using the method of series expansions (which seems to be new in the literature).

Finally, in Section \ref{section-SPDE}, we review some recent results related to SPDEs with ``colored'' noise in space and time, using the random field approach. The idea is to develop a framework which combines and extends Dalang's theory with the recent developments in the stochastic analysis for fBm. This idea appeared in \cite{BT-ALEA}, being inspired by \cite{nualart-ouknine}, which introduced a noise that was essentially a fBm in time and a Brownian motion in space. Without going into technical details,
we give a list of the results obtained in this framework for the heat and wave equations in the last 10 years, which illustrate the dynamical interplay between the regularity of the noise and various properties of the solution (such as intermittency and Feyman-Kac representations). For the heat equation, these results were obtained primarily by the school of David Nualart and Yaozhong Hu at University of Kansas. For the wave equation, the list includes results obtained by the author of these notes, in collaboration with Daniel Conus, Lluis Quer-Sardaynons, Jian Song and Ciprian Tudor.

\section{It\^o integral and some simple SDEs}
\label{section-ito}

In this section, we introduce the It\^o integral with respect to the Brownian motion, and we solve some simple SDEs driven by Brownian motion.

\subsection{It\^o integral: a breakthrough idea}

Let $(B_t)_{t \geq 0}$ be a Brownian motion defined on a complete probability space $(\Omega,\cF,P)$. The map $t \mapsto B_{t}(\omega)$ is not differentiable, for any $\omega \in \Omega$. So how can we define the integral $B(\varphi)=\int_{0}^{T} \varphi(t)dB_t$ for a deterministic function $\varphi:[0,T] \to \bR$, for fixed $T>0$?

The construction starts from the important remark that $$E(B_t B_s)=t \wedge s=\langle 1_{(0,t]}, 1_{(0,s]} \rangle_{L^2([0,T])}.$$

We define $B(1_{(0,t]})=B_t$ and we extend this definition to all simple functions, i.e. linear combinations of functions of the form $1_{(0,t]}$. The map $1_{(0,t]} \mapsto B_t$ is an isometry from $L^2([0,T])$ to $L^2(\Omega)$.
Since simple functions are dense in $L^2([0,T])$, this map can be extended to $L^2([0,T])$. We define in this way the isometry $B:L^2([0,T]) \to L^2(\Omega)$:
$$E|B(\varphi)|^2 =\int_0^T |\varphi(t)|^2 dt, \quad \mbox{for all} \ \varphi \in L^2([0,T]).$$

How can we extend this construction to a random integrand $X=\{X(t)\}_{t \geq 0}$? For this, we use the idea introduced by It\^o's in \cite{ito44}. The idea was extended later to martingales with continuous (or even c\`adl\`ag) sample paths: the integrator process (in this case the Brownian motion $(B_t)_{t \geq 0}$) is a martingale, and the integral $I^B(X)=\{I_t^B(X)\}_{t \geq 0}$ is also as a martingale.

The simplest case is when the integrand $X=(X_{t})_{t\geq 0}$ is of the form:
\begin{equation}
\label{elementary}
X(\omega,t)=Y(\omega)1_{(a,b]}(t), \quad \mbox{where $0<a<b$ and $Y$ is $\cF_a^B$-measurable}.
\end{equation}
Here $\cF_t^B=\sigma(\{B_s;s \leq t\}) \wedge \cN$, where $\cN=\{A \in \cF; P(A)=0 \ \mbox{or} \ P(A)=1\}$ is the $\sigma$-field of $P$-null sets.
In this case, we define $I_t^B(X)=Y (B_{t \wedge b}-B_{t \wedge a})$. It can be proved that
$I^B(X)$ is a martingale with respect to the filtration $(\cF_t^B)_t$ and
\begin{equation}
\label{isometry}
E|I_t^B(X)|^2=E\int_0^t |X(s)|^2 ds.
\end{equation}

Let $\cE$ be the set of {\em simple processes} on $\bR_{+}$, i.e. linear combinations of processes of form \eqref{elementary}. Let $\cL^B$ be the set of measurable $(\cF_t^B)_t$-adapted processes $X$ such that
$\|X\|_t^2:=E\int_0^t |X(s)|^2 ds<\infty$ for any $t>0$. We endow $\cL^B$ with the norm:
$$\|X\|=\sum_{k \geq 1}\frac{1\wedge \|X\|_k}{2^k}.$$
Recall that a process $X=(X_t)_{t \geq 0}$ is {\em measurable} if the map $(\omega,t) \mapsto X_t(\omega)$ is $\cF \times \cB(\bR)$-measurable on $\Omega \times \bR_{+}$, and  {\em $(\cF_t)_t$-adapted} if $X_t$ is $\cF_t$-measurable for any $t \geq 0$ (for a given filtration $(\cF_t)_t$ of sub-$\sigma$-fields of $\cF$).

It can be proved that $\cE$ is dense in $\cL^B$. For any $X \in \cL^B$ and $t >0$, we can define an element $I_t^B(X)$ in $L^2(\Omega)$ by approximating $X$ with a sequence $(X_n)_n$ in $\cE$. The process $\{I_t^B(X)\}_t$ is a martingale and relation \eqref{isometry} continues to hold for any $X \in \cL^B$. We use the notation $$I_t^B(X)=\int_0^t X(s)dB(s)$$ and we say that $I_t^B(X)$ is the {\em stochastic integral} (or {\em It\^o integral}) of $X$ with respect to $B$.

An important property of this integral is the following result, called {\em It\^o formula}.

\begin{theorem}[Theorem 7.3.1 of \cite{kuo}]
\label{Ito-formula}
If $F:[0,\infty) \times \bR \to \bR$ is a function which is continuously differentiable in $t$ and twice continuously differentiable in $x$, then
$$F(t,B_t)-F(0,0)=\int_0^t \frac{\partial F}{\partial t}(s,B_s)ds+\int_0^t \frac{\partial F}{\partial x}(s,B_s)dB(s)+\frac{1}{2}\int_0^t \frac{\partial^2 F}{\partial x^2}(s,B_s)ds.$$
\end{theorem}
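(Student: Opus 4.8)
The plan is to run the classical partition-and-Taylor argument, treating the increments in time and in space separately, and to locate the origin of the extra term $\frac12\int_0^t\frac{\partial^2 F}{\partial x^2}(s,B_s)\,ds$ in the quadratic variation of Brownian motion. First I would reduce to the case where $F$ and its partial derivatives $\partial_t F$, $\partial_x F$, $\partial_{xx}F$ are bounded. For the general case one introduces the stopping times $\tau_N=\inf\{t\ge 0:|B_t|\ge N\}$, applies the bounded case to a function agreeing with $F$ on $[0,T]\times[-N,N]$, and lets $N\to\infty$, using that $\tau_N\to\infty$ a.s.\ and that both sides of the formula are continuous in $t$.

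In the bounded case, fix $t>0$ and a partition $0=t_0<t_1<\cdots<t_n=t$ with mesh $\|\pi\|$. Start from the telescoping identity
$$F(t,B_t)-F(0,0)=\sum_{i=0}^{n-1}\big[F(t_{i+1},B_{t_{i+1}})-F(t_i,B_{t_i})\big],$$
and split each summand as $[F(t_{i+1},B_{t_{i+1}})-F(t_i,B_{t_{i+1}})]+[F(t_i,B_{t_{i+1}})-F(t_i,B_{t_i})]$. For the first bracket, the mean value theorem in $t$ gives $\partial_t F(\xi_i,B_{t_{i+1}})(t_{i+1}-t_i)$ for some $\xi_i\in(t_i,t_{i+1})$; for the second, the second-order Taylor expansion in $x$ gives
$$\partial_x F(t_i,B_{t_i})\,\Delta_i B+\tfrac12\,\partial_{xx}F(t_i,\eta_i)\,(\Delta_i B)^2,$$
where $\Delta_i B=B_{t_{i+1}}-B_{t_i}$ and $\eta_i$ lies between $B_{t_i}$ and $B_{t_{i+1}}$.

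Now I would let $\|\pi\|\to0$ along a suitable sequence of partitions on which all the convergences below hold simultaneously. The sum of the time terms converges a.s.\ to the Riemann integral $\int_0^t\partial_t F(s,B_s)\,ds$, by continuity of $s\mapsto\partial_t F(s,B_s)$ and a.s.\ continuity of the Brownian paths. The sum $\sum_i\partial_x F(t_i,B_{t_i})\,\Delta_i B$ is exactly the Itô integral of a simple process approximating the adapted, path-continuous integrand $\partial_x F(\cdot,B_\cdot)\in\cL^B$, so it converges in $L^2(\Omega)$ to $\int_0^t\partial_x F(s,B_s)\,dB(s)$ by the isometry \eqref{isometry}. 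For the quadratic term I would first replace $\eta_i$ by $B_{t_i}$, the error being controlled by the modulus of continuity of $\partial_{xx}F$ on a compact set together with $\max_i|\Delta_i B|\to0$ a.s.; then show
$$\sum_i\partial_{xx}F(t_i,B_{t_i})\big[(\Delta_i B)^2-(t_{i+1}-t_i)\big]\longrightarrow 0\quad\text{in }L^2(\Omega),$$
using that the summands are, conditionally on $\cF_{t_i}^B$, orthogonal with $E\big[((\Delta_i B)^2-(t_{i+1}-t_i))^2\,\big|\,\cF_{t_i}^B\big]=2(t_{i+1}-t_i)^2$, so the $L^2$ norm of the sum is bounded by $C\sum_i(t_{i+1}-t_i)^2\le C\|\pi\|\,t\to0$. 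What remains, $\sum_i\partial_{xx}F(t_i,B_{t_i})(t_{i+1}-t_i)$, is again a Riemann sum converging to $\int_0^t\partial_{xx}F(s,B_s)\,ds$.

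The main obstacle is exactly this last step: showing that the second-order space increments contribute the nontrivial term $\frac12\int_0^t\partial_{xx}F(s,B_s)\,ds$ rather than vanishing, which is the phenomenon distinguishing Itô calculus from ordinary calculus and rests on Brownian motion having finite, nonzero quadratic variation equal to $t$. A secondary technical point is the bookkeeping of modes of convergence — a.s.\ (along the chosen sequence of partitions) for the two Riemann-type sums versus $L^2(\Omega)$ for the stochastic integral and the quadratic term — which is handled by passing to a single subsequence of partitions, and then removing the boundedness assumption via the localization described at the start.
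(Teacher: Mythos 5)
Your sketch is correct: the paper itself gives no proof of this statement, quoting it directly as Theorem 7.3.1 of \cite{kuo}, and your partition-and-Taylor argument (time increments via the mean value theorem, space increments via second-order Taylor expansion, the martingale-difference $L^2$ estimate $E[((\Delta_i B)^2-\Delta_i t)^2\,|\,\cF_{t_i}^B]=2(\Delta_i t)^2$ producing the quadratic-variation term, and localization by the stopping times $\tau_N$) is essentially the standard textbook proof found in that reference. No gaps worth flagging beyond the modes-of-convergence bookkeeping, which you already handle by passing to a common subsequence of partitions and using continuity in $t$ of both sides.
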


\subsection{Picard's iterations: a story on how to solve a simple SDE}

For the purpose of the connection with some SPDEs which we will see later in these notes, we consider the following SDE:
\begin{equation}
\label{SDE}
dX(t)=\sigma(X(t))dB(t), \quad X(0)=0.
\end{equation}
Here $\sigma$ is a globally Lipschitz function, that is there exists a constant $C_{\sigma}>0$ such that
\begin{equation}
\label{Lip}
|\sigma(x)-\sigma(y)| \leq C_{\sigma}|x-y| \quad \mbox{for all} \ x,y \in \bR.
\end{equation}
In particular, $|\sigma(x)|^2 \leq D_{\sigma}^2 (1+|x|^2)$ for all $x \in \bR$, where $D_{\sigma}^2=2\max\{C_{\sigma}^2,|\sigma(0)|^2\}$.
By definition, the {\em solution} to \eqref{SDE} satisfies
$$X(t)=\int_0^t \sigma(X(s))dB(s),$$
provided that the stochastic integral is well-defined. A sufficient condition for this integral to be well-defined is that $X$ is measurable and $(\cF_t^B)_t$-adapted and
$$K_T:=\sup_{t \in [0,T]}E|X(t)|^2 <\infty \quad \mbox{for all} \ T>0.$$
This is because in this case, the process $\{\sigma(X(t))\}_{t\geq 0}$ is measurable and $(\cF_t^B)_t$-adapted, and
$$E\int_0^t |\sigma(X(s))|^2 ds \leq D_{\sigma}^2 \int_0^t \Big(1+E|X(s)|^2\Big) ds\leq D_{\sigma}^2(t+K_t)<\infty.$$

Two processes $(X_t)_{t\geq 0}$ and $(Y_t)_{t\geq 0}$ are {\em modifications} of each other if $P(X_t=Y_t)=1$ for all $t \geq 0$.
We have the following result.

\begin{theorem}
\label{sol-SDE-th}
There exists a solution to equation \eqref{SDE} and this solution is unique (up to a modification).
\end{theorem}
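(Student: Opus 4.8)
The plan is to use the standard Picard iteration scheme. Define $X_0(t) \equiv 0$ and inductively set
$$X_{n+1}(t) = \int_0^t \sigma(X_n(s)) \, dB(s).$$
First I would verify by induction that each $X_n$ is well-defined: assuming $X_n$ is measurable, $(\cF_t^B)_t$-adapted, and satisfies $\sup_{t \in [0,T]} E|X_n(t)|^2 < \infty$ for all $T$, the remark preceding the theorem shows the integral defining $X_{n+1}$ makes sense, and $X_{n+1}$ inherits the same properties (adaptedness because the Itô integral up to time $t$ is $\cF_t^B$-measurable; the $L^2$ bound from the isometry \eqref{isometry} together with the linear growth bound $|\sigma(x)|^2 \le D_\sigma^2(1+|x|^2)$). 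A Gronwall argument on $g_n(t) := \sup_{s \le t} E|X_n(s)|^2$ gives a bound uniform in $n$, namely $g_n(t) \le D_\sigma^2 t \, e^{D_\sigma^2 t}$ or similar, which I will need later for the fixed-time conclusion.

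Next I would show the sequence is Cauchy. Set $d_n(t) := E|X_{n+1}(t) - X_n(t)|^2$. By \eqref{isometry} and the Lipschitz property \eqref{Lip},
$$d_n(t) = E \int_0^t |\sigma(X_n(s)) - \sigma(X_{n-1}(s))|^2 \, ds \le C_\sigma^2 \int_0^t d_{n-1}(s) \, ds.$$
Iterating this inequality yields $d_n(t) \le \frac{(C_\sigma^2 t)^n}{n!} \, M_T$ on $[0,T]$, where $M_T := \sup_{t \le T} d_0(t) = \sup_{t\le T} E|X_1(t)|^2 < \infty$. Since $\sum_n \sqrt{d_n(t)} < \infty$, the sequence $(X_n(t))_n$ converges in $L^2(\Omega)$ for each fixed $t$; call the limit $X(t)$. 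Then $X(t)$ is $\cF_t^B$-measurable as an $L^2$-limit of such, and $\sup_{t \le T} E|X(t)|^2 < \infty$ from the uniform bound above. To pass to the limit in the defining equation, I note that $\|\sigma(X_n(\cdot)) - \sigma(X(\cdot))\|_t \to 0$ by the Lipschitz bound and dominated convergence, so $\int_0^t \sigma(X_n(s))\,dB(s) \to \int_0^t \sigma(X(s))\,dB(s)$ in $L^2(\Omega)$, while the left side converges to $X(t)$; hence $X(t) = \int_0^t \sigma(X(s))\,dB(s)$ a.s.\ for each $t$.

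For uniqueness, suppose $X$ and $Y$ are two solutions (each measurable, adapted, with finite $\sup_{t\le T} E|\cdot(t)|^2$). Then $h(t) := E|X(t) - Y(t)|^2$ satisfies $h(t) \le C_\sigma^2 \int_0^t h(s)\,ds$ by the same isometry-plus-Lipschitz estimate, and $h$ is bounded on $[0,T]$, so Gronwall's lemma forces $h \equiv 0$; thus $P(X(t) = Y(t)) = 1$ for every $t$, i.e.\ the solutions are modifications of each other.

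The main obstacle is a measurability subtlety rather than an analytic one: the construction above produces, for each fixed $t$, a random variable $X(t) \in L^2(\Omega)$, but to speak of the process $\{\sigma(X(s))\}_{s \ge 0}$ being measurable (jointly in $(\omega,s)$) — which is required for the stochastic integral in the definition of a solution to make sense — one must choose the versions $X(t)$ coherently so that $(\omega,t) \mapsto X(t,\omega)$ is $\cF \times \cB(\bR_+)$-measurable. This is handled by observing that the convergence $X_n \to X$ holds in the norm $\|\cdot\|$ on $\cL^B$ (the Cauchy estimate gives $\|X_{n+1} - X_n\|_T^2 \le \frac{(C_\sigma^2 T)^{n+1}}{(n+1)!} M_T$, which is summable), so the limit lives in $\cL^B$ and in particular has a measurable, adapted representative; I would make this point explicitly. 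Everything else is routine Gronwall-type bookkeeping.
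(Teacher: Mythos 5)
Your proposal is correct and follows essentially the same route as the paper: Picard iteration, the isometry \eqref{isometry} plus the Lipschitz bound \eqref{Lip} to get the recursive $L^2$ estimate, summability of the increments to obtain the limit, and a Gronwall argument for uniqueness. The only cosmetic differences are that you iterate the integral inequality directly to get the factorial bound (the paper instead invokes Lemma \ref{gronwall}) and you settle the joint-measurability issue by working with convergence in $\cL^B$, whereas the paper uses $L^2(\Omega)$-continuity of each iterate to pass to a measurable modification at every step.
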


\begin{proof}
The existence part is proved using a Picard's iterations scheme. For any $t \geq 0$, let $X_0(t)=0$ and set
\begin{equation}
\label{def-Picard}
X_{n+1}(t)=\int_0^t \sigma(X_n(s))dB(s),
\end{equation}
for all $n\geq 0$. The following property is proved by induction on $n \geq 0$:
\begin{equation}
\label{propertyP}
\tag{P}
\left\{
\begin{array}{rcl}
& (i)  &  \ \displaystyle X_n(t) \ \mbox{is well-defined for any} \ t \geq 0  \\[1ex]
& (ii)  &  \displaystyle \sup_{t \in [0,T]}E|X_n(t)|^{2}<\infty \ \mbox{for any $T>0$}, \\ [1ex]
& (iii) & \displaystyle t \mapsto X_n(t) \ \mbox{is $L^2(\Omega)$-continuous} \\ [1ex]
& (iv) & \displaystyle X_n(t) \ \mbox{is $\cF_t^B$-measurable for any $t\geq 0$}.
\end{array} \right.
\end{equation}

In particular, {\em (iii)} implies that $\{X_n(t)\}_{t\geq 0}$ has a measurable modification $\{\widetilde{X}_n(t)\}_{t\geq 0}$, i.e. $X_n(t)=\widetilde{X}_n(t)$ a.s. for any $t\geq 0$. In fact, we work with this modification for defining $X_{n+1}(t)$, but to simplify the notation we denote it also by $\{X_n(t)\}_{t\geq 0}$.

Fix $T>0$. We show that $\{X_n(t)\}_n$ converges in $L^2(\Omega)$, uniformly in $t \in [0,T]$. To see this, let
$H_{n+1}(t)=E|X_{n+1}(t)-X_n(t)|^2$. By the isometry property \eqref{isometry} of the stochastic integral and the Lipschitz property \eqref{Lip} of $\sigma$,
$$H_{n+1}(t)=E\int_0^t |\sigma(X_n(s))-X_{n-1}(s)|^2 ds \leq C_{\sigma}^2 \int_0^t H_n(s) ds.$$

By Lemma \ref{gronwall} below, it follows that $\sum_{n\geq 1}\sup_{t \leq T}H_n^{1/2}(t)<\infty$. We denote by $\|\cdot\|_2$ the norm in $L^2(\Omega)$. Then
$$\sup_{t\leq T}\|X_{m}(t)-X_{n}(t)\|_2 \leq \sum_{k=n+1}^m \sup_{t\leq T}\|X_k(t)-X_{k-1}(t)\|_2=\sum_{k=n+1}^m \sup_{t\leq T}H_k^{1/2}(t)\to 0,$$
as $n,m \to \infty$. This shows that $\{X_n(t)\}_{n\geq 1}$ is a Cauchy sequence in $L^2(\Omega)$, uniformly in $t \in [0,T]$. If we denote by $X(t)$ the limit of this sequence, then $\{X(t)\}_{t \geq 0}$ is the solution to equation \eqref{SDE}; to see this, simply take the limit (in $L^2(\Omega)$) as $n\to \infty$ in \eqref{def-Picard}.

The solution is unique (up to a modification). To see this, note that if $\{Y(t)\}_{t\geq 0}$ is another solution of \eqref{SDE}, then
$$E|X(t)-Y(t)|^2=E\int_0^t |\sigma(X(s))-\sigma(Y(s))|^2 ds \leq C_{\sigma}^2 \int_0^t E|X(s)-Y(s)|^2 ds,$$
and hence, by Lemma \ref{gronwall}, $E|X(t)-Y(t)|^2=0$ for all $t \geq 0$. This proves that $X(t)=Y(t)$ a.s. for all $t \geq 0$.
\end{proof}

In the previous proof, we used the following result.

\begin{lemma}[Lemma 10.2.4 of \cite{kuo}] (Gronwall Lemma)
\label{gronwall}
Let $f_n:[0,T] \to [0,\infty)$ be such that
$$f_{n+1}(t) \leq \beta \int_0^t f_n(s)ds,$$
for all $t \in [0,T]$ and $n\geq 1$. Suppose that $f_1(t) \leq M$ for all $t \in [0,T]$. Then
$$f_n(t) \leq M \beta^{n-1} \frac{t^{n-1}}{(n-1)!},$$
for all $t \in [0,T]$ and $n\geq 1$. In particular, $\sum_{n\geq 1} \sup_{t \leq T}f_n^{1/2}(t)<\infty$.
\end{lemma}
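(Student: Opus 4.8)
The plan is a straightforward induction on $n$, using the hypothesis $f_{n+1}(t) \leq \beta \int_0^t f_n(s)\,ds$ to pass from the bound at stage $n$ to the bound at stage $n+1$ by a single integration. The final summability assertion is then a routine consequence of the factorial growth in the denominator, which I would verify with the ratio test.

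\textbf{Base case.} For $n=1$, the hypothesis $f_1(t) \leq M$ for all $t \in [0,T]$ is exactly the claimed inequality $f_1(t) \leq M \beta^{0} t^{0}/0! = M$, so there is nothing to prove.

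\textbf{Inductive step.} Assume that $f_n(t) \leq M \beta^{n-1} t^{n-1}/(n-1)!$ for all $t \in [0,T]$. Then, since $f_n \geq 0$, the hypothesis and the inductive bound give, for every $t \in [0,T]$,
$$
f_{n+1}(t) \leq \beta \int_0^t f_n(s)\,ds \leq \beta \int_0^t M \beta^{n-1} \frac{s^{n-1}}{(n-1)!}\,ds = M \beta^{n} \frac{t^{n}}{n!},
$$
which is the claimed inequality at stage $n+1$. This closes the induction.

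\textbf{Summability.} Taking the supremum over $t \in [0,T]$ in the established bound and then the square root, we obtain
$$
\sup_{t \leq T} f_n^{1/2}(t) \leq \left( M \beta^{n-1} \frac{T^{n-1}}{(n-1)!} \right)^{1/2} = M^{1/2} \frac{(\beta T)^{(n-1)/2}}{\sqrt{(n-1)!}}.
$$
Writing $a_n = (\beta T)^{(n-1)/2}/\sqrt{(n-1)!}$, we have $a_{n+1}/a_n = \sqrt{\beta T / n} \to 0$ as $n \to \infty$, so $\sum_{n \geq 1} a_n < \infty$ by the ratio test, and therefore $\sum_{n \geq 1} \sup_{t \leq T} f_n^{1/2}(t) < \infty$. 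There is no real obstacle here; the only point worth a line of care is the convergence of the series, which is immediate once the $1/\sqrt{(n-1)!}$ factor is isolated.
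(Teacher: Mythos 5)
Your proof is correct: the induction and the ratio-test argument for $\sum_n \sup_{t\le T} f_n^{1/2}(t) < \infty$ are exactly the standard treatment. The paper itself offers no proof of this lemma (it simply cites Lemma 10.2.4 of \cite{kuo}), and your argument is essentially the one found in that reference, so there is nothing to add.
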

\subsection{Particular case: SDE with multiplicative noise}

We consider now the particular case $\sigma(x)=x$. More precisely, we consider the equation: \begin{equation}
\label{SDE-multip}
dX(t)=X(t)dB(t), \quad X(0)=1,
\end{equation}
whose solution is a measurable $(\cF_t^B)_t$-adapted process $\{X(t)\}_{t \geq 0}$ which satisfies:
\begin{equation}
\label{SDE-multip-int}
X(t)=1+\int_0^t X(s)dB(s).
\end{equation}

The existence of the solution to this equation can be proved using other two different methods, which have the advantage (over the method of Picard's iterations) that yield the explicit form of the solution.

\begin{theorem}
\label{exist-sol-BM}
The solution to equation \eqref{SDE-multip} is the geometric Brownian motion: $$X(t)=e^{B_t-t/2}.$$
\end{theorem}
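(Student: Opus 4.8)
The plan is to verify directly that $X(t) = e^{B_t - t/2}$ satisfies the integral equation \eqref{SDE-multip-int}, using It\^o's formula (Theorem \ref{Ito-formula}). First I would introduce the function $F(t,x) = e^{x - t/2}$, which is clearly $C^1$ in $t$ and $C^\infty$ in $x$, so Theorem \ref{Ito-formula} applies. I would then compute the three partial derivatives: $\frac{\partial F}{\partial t}(t,x) = -\frac12 e^{x-t/2}$, $\frac{\partial F}{\partial x}(t,x) = e^{x-t/2}$, and $\frac{\partial^2 F}{\partial x^2}(t,x) = e^{x-t/2}$. Evaluating along the Brownian path and noting $F(0,0) = 1$, It\^o's formula gives
$$e^{B_t - t/2} - 1 = \int_0^t \Bigl(-\tfrac12 e^{B_s - s/2}\Bigr) ds + \int_0^t e^{B_s - s/2} dB(s) + \frac12 \int_0^t e^{B_s - s/2} ds.$$
The first and third (Lebesgue) integrals cancel exactly, leaving $e^{B_t - t/2} - 1 = \int_0^t e^{B_s - s/2} dB(s)$, which is precisely \eqref{SDE-multip-int} with $X(s) = e^{B_s - s/2}$.

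It remains to check that $X$ is an admissible integrand and solution, i.e. that it is measurable, $(\cF_t^B)_t$-adapted, and satisfies $\sup_{t \in [0,T]} E|X(t)|^2 < \infty$ for every $T>0$. Adaptedness and measurability are immediate since $X(t)$ is a continuous deterministic function of $B_t$ and $t$. For the moment bound, I would use that $B_t$ is Gaussian with mean $0$ and variance $t$, so $E\, e^{2B_t} = e^{2t}$, whence $E|X(t)|^2 = e^{-t} E\, e^{2B_t} = e^{t}$, which is bounded by $e^T$ on $[0,T]$. This confirms that the stochastic integral $\int_0^t X(s) dB(s)$ is well-defined in the sense described earlier in the excerpt, so $X$ is genuinely a solution; uniqueness then follows from Theorem \ref{sol-SDE-th} (equation \eqref{SDE-multip} is the special case $\sigma(x) = x$ of \eqref{SDE}, shifted by the initial condition $1$, and the Lipschitz condition \eqref{Lip} holds with $C_\sigma = 1$).

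I do not expect any serious obstacle here: the argument is a textbook application of It\^o's formula, and the only mild subtlety is the bookkeeping needed to see that the two $ds$-integrals cancel — which is exactly why the drift term $-t/2$ appears in the exponent in the first place. One could alternatively motivate the guess $X(t) = e^{B_t - t/2}$ by analogy with the ODE $x' = x b'$ (whose solution is $e^{b(t)}$) and then observe that the It\^o correction term forces the extra $-t/2$; but for a clean write-up it is simplest to just present the candidate and verify it.
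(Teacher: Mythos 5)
Your proof is correct, but it is precisely the route the paper deliberately sets aside: the paper's proof opens by noting that ``the standard method is to apply It\^o formula to $F(t,x)=e^{x-t/2}$'' and then does something different, namely a series-expansion argument. There, one shows that any solution of \eqref{SDE-multip-int} must equal $1+\sum_{n\geq 1}J_n^B(t)$ with $J_n^B(t)$ the $n$-fold iterated integral, identifies $J_n^B(t)=\frac{1}{n!}I_n^B(t)$ with the multiple integral, evaluates $I_n^B(t)=t^{n/2}H_n(B_t/t^{1/2})$ via Theorem \ref{int-Hermite}, and sums the series using the Hermite generating function \eqref{Hermite} to obtain $e^{B_t-t/2}$. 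Your verification-by-It\^o-formula is shorter and more elementary, and your accompanying checks (adaptedness, $E|X(t)|^2=e^t$ so the integrand is admissible, uniqueness by the Gronwall argument) are all sound — just note that Theorem \ref{sol-SDE-th} is stated for $X(0)=0$, so strictly you should repeat its uniqueness step (which transfers verbatim, since the difference of two solutions of \eqref{SDE-multip-int} again satisfies the Gronwall inequality). What the paper's approach buys, and the reason it is chosen, is that it requires no It\^o formula and no candidate guessed in advance: the same chaos-expansion scheme is reused later for the Parabolic Anderson Model \eqref{PAM-series} and for the fractional SDE of Theorem \ref{exist-sol-FBM}, where the integrator is not a (semi)martingale and your direct verification strategy is not available in the same form. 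Your approach buys brevity and transparency for the Brownian case, at the cost of that generality.
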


\begin{proof} The standard method is to apply It\^o formula (Theorem \ref{Ito-formula}) to the function $F(t,x)=e^{x-t/2}$. We present here another method, which has the advantage that can be applied to equations driven by a more general Gaussian noise than Brownian motion.
Intuitively, this method consists in writing $X(s)=1+\int_0^s X(r)dB(r)$ in \eqref{SDE-multip-int}, and iterating this procedure. It can be shown that, if it exists, the solution to \eqref{SDE-multip} can be written as the series
\begin{equation}
\label{series-sol}
X(t)=1+\sum_{n\geq 1} J_n^B(t),
\end{equation}
where $J_n^B(t)$ is the iterated integral:
$$J_n^B(t)=\int_0^t \left( \int_{0}^{t_n} \ldots \left(\int_0^{t_2} 1 dB(t_1)\right) \ldots dB(t_{n-1}) \right) dB(t_n).$$

Therefore, it is enough to show that the series \eqref{series-sol} converges in $L^2(\Omega)$.
By the symmetry of the integrand, $J_n^{B}(t)=\frac{1}{n!}I_n^B(t)$,
where $I_n^B(t)$ is the multiple integral of order $n$:
$$I_n^{B}(t)=\int_{[0,t]^n}1 dB(t_1) \ldots dB(t_n).$$
The integrals $I_n^B(t)$ can be calculated using Hermite polynomials $H_n(x)$. For $n=2$ and $n=3$, this can be seen using iterated integrals, as follows. Recall that
 $H_2(x)=x^2-1$, $H_3(x)=x^3-3x$. By applying It\^o formula (Theorem \ref{Ito-formula}) to the functions $F(x)=x^2$, respectively $F(t,x)=\frac{x^3}{3}-tx$, we see that
$$I_2^B(t)=2\int_0^t B_sdB(s)=B_t^2-t=tH_2\left(\frac{B_t}{t^{1/2}}\right).$$
$$I_3^B(t)=6 \int_0^t \left(\int_0^s B_r dB(r)\right) dB(s)=3 \int_0^t (B_s^2-s) dB(s)=
B_t^3-3t B_t =t^{3/2}H_3\left(\frac{B_t}{t^{1/2}} \right).$$

In general for higher order $n$, let $H_n$ be the Hermite polynomial of order $n \geq 1$, i.e.
$$H_n(x)=(-1)^n e^{x^2/2} \frac{d^n}{dx^n}e^{-x^2/2}.$$
Using Theorem \ref{int-Hermite} below with $W=B$, $\cH=L^2([0,T])$ and $h=1_{[0,t]}$, we see that
$$I_n^B(t)=t^{n/2}H_n\left(\frac{B_t}{t^{1/2}} \right).$$
Hence,
$$X(t)=1+\sum_{n\geq 1}\frac{1}{n!}I_n^B(t)=1+\sum_{n\geq 1}\frac{1}{n!} t^{n/2} H_n\left(\frac{B_t}{t^{1/2}} \right)=e^{B_t-t/2},$$
where for the last equality, we used the following property of Hermite polynomials:
\begin{equation}
\label{Hermite}
e^{tx-t^2/2}=1+\sum_{n\geq 1}\frac{1}{n!}t^n H_n(x).
\end{equation}
\end{proof}

In the previous proof, we used the following result:

\begin{theorem}[Theorem 2.7.7 of \cite{nourdin-pecatti12}]
\label{int-Hermite}
Let $W=\{W(h)\}_{h \in \cH}$ be an isonormal Gaussian process corresponding to the Hilbert space $\cH$, i.e. a zero-mean Gaussian proces with covariance $$E[W(g)W(h)]=\langle g,h\rangle_{\cH}.$$ If $I_n$ is the multiple integral of order $n$ with respect to $W$, then for any $h \in \cH$,
$$I_n(h^{\otimes n})=\|h\|_{\cH}^nH_n \left(\frac{W(h)}{\|h\|_{\cH}} \right).$$
\end{theorem}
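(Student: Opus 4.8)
The plan is to normalize, set up matching three-term recursions on the polynomial side and on the Wiener-chaos side, and then induct on $n$.

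\emph{Reduction to $\|h\|_{\cH}=1$.} Since $I_n$ is linear on $\cH^{\otimes n}$ and $(ch)^{\otimes n}=c^n h^{\otimes n}$ for $c\in\bR$, we have $I_n(h^{\otimes n})=\|h\|_{\cH}^n\,I_n(e^{\otimes n})$ with $e=h/\|h\|_{\cH}$, and similarly $W(h)=\|h\|_{\cH}W(e)$ with $W(e)$ standard normal. So it suffices to prove $I_n(h^{\otimes n})=H_n(W(h))$ when $\|h\|_{\cH}=1$ (the case $h=0$ being trivial).

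\emph{Two recursions.} Differentiating the generating identity \eqref{Hermite} in $t$ and matching coefficients of $t^n$ gives $H_{n+1}(x)=xH_n(x)-nH_{n-1}(x)$, with $H_0\equiv1$ and $H_1(x)=x$. The counterpart on the chaos side is: for $\|h\|_{\cH}=1$,
$$I_{n+1}(h^{\otimes(n+1)})=W(h)\,I_n(h^{\otimes n})-n\,I_{n-1}(h^{\otimes(n-1)}).$$
This I would derive from the product formula for multiple integrals, which in the case at hand reads $I_n(h^{\otimes n})\,W(h)=I_{n+1}(h^{\otimes(n+1)})+n\,I_{n-1}(h^{\otimes(n-1)})$: here $W(h)=I_1(h)$, $h^{\otimes n}\otimes h=h^{\otimes(n+1)}$ is already symmetric, and the single contraction is $h^{\otimes n}\otimes_1 h=\langle h,h\rangle_{\cH}\,h^{\otimes(n-1)}=h^{\otimes(n-1)}$. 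To establish the product formula itself I would first verify it on ``elementary'' symmetric tensors built from a finite orthonormal system $e_1,\dots,e_k$ in $\cH$ --- where each $I_m$ is an explicit polynomial in the independent standard Gaussians $W(e_1),\dots,W(e_k)$ and the identity reduces to a finite computation, separating the component of $g$ orthogonal to $\mathrm{span}(e_1,\dots,e_k)$ (which contributes the $I_{n+1}$ term) from the overlap (which contributes the contraction term) --- and then pass to general $f,g$ using the isometry $E|I_m(f)|^2=m!\,\|f\|^2_{\cH^{\otimes m}}$ for symmetric $f$ together with a density argument approximating $h$ in norm by elements of finite-dimensional subspaces of $\cH$.

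\emph{Induction.} For $n=0$, $I_0$ is the identity on constants and $H_0\equiv1$; for $n=1$, $I_1(h)=W(h)=H_1(W(h))$. If $I_k(h^{\otimes k})=H_k(W(h))$ for all $k\le n$, then the chaos recursion followed by the Hermite recursion gives
$$I_{n+1}(h^{\otimes(n+1)})=W(h)H_n(W(h))-nH_{n-1}(W(h))=H_{n+1}(W(h)),$$
completing the induction and, with the first step, the proof.

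The step I expect to be the main obstacle is the rigorous proof of the product/recursion formula: one must fix a clean definition of $I_m$ on a dense class of symmetric tensors, pin down exactly how integrating against the ``fresh'' randomness $W(g)$ raises the order to $n+1$ while the already-used part of $g$ lowers it and produces the contraction coefficient $n$, and then control the resulting $L^2(\Omega)$ limit. By contrast the scaling reduction, the Hermite identity, and the induction are routine. An alternative that repackages the same content is to show directly that $\sum_{n\ge0}\frac{t^n}{n!}I_n(h^{\otimes n})=\exp\!\big(tW(h)-\tfrac12 t^2\|h\|_{\cH}^2\big)$ in $L^2(\Omega)$ and then compare with \eqref{Hermite}; the essential work is then hidden in justifying this stochastic-exponential expansion.
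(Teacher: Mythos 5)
The paper offers no proof of this statement at all: it is imported as Theorem 2.7.7 of \cite{nourdin-pecatti12} and used as a black box in the proofs of Theorems \ref{exist-sol-BM} and \ref{exist-sol-FBM}, so the comparison is with the standard references rather than with an in-paper argument. Your skeleton is exactly the classical proof (essentially the one in \cite{nualart06}): reduce to $\|h\|_{\cH}=1$ by multilinearity, derive $H_{n+1}(x)=xH_n(x)-nH_{n-1}(x)$ from \eqref{Hermite}, get the matching chaos recursion from the product formula $I_n(h^{\otimes n})I_1(h)=I_{n+1}(h^{\otimes(n+1)})+n\,I_{n-1}(h^{\otimes(n-1)})$, and induct. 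Those steps, including the contraction computation $h^{\otimes n}\otimes_1 h=\langle h,h\rangle_{\cH}h^{\otimes(n-1)}$ and the base cases, are correct as stated.

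The genuinely thin point is the one you flag: the justification of the $I_n\times I_1$ product formula, and, underneath it, how $I_n$ is even defined on the purely diagonal tensor $h^{\otimes n}$. Two concrete cautions. First, if on a finite orthonormal system you read ``$I_m$ is an explicit polynomial in $W(e_1),\ldots,W(e_k)$'' as meaning products of Hermite polynomials, you are assuming the theorem; to avoid circularity you must work with off-diagonal elementary tensors (distinct basis indices), where $I_m$ is a sum of products of distinct independent Gaussians. Second, having done so, ``approximating $h$ in norm by elements of finite-dimensional subspaces of $\cH$'' does not close the argument: $h^{\otimes n}$ is supported on the diagonal, and within a fixed finite orthonormal span the off-diagonal tensors do not approximate it (in a one-dimensional $\cH$ there are none at all). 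The standard repair is to realize $\cH$ isometrically as $L^2(T,\mu)$ with $\mu$ non-atomic (always possible after embedding $\cH$ into an infinite-dimensional separable Hilbert space), so that the diagonal of $T^{n}$ is $\mu^{n}$-null and off-diagonal elementary functions are dense; the product formula is then verified on elementary functions by splitting $g$ into the part carried by the sets used by $f$ and the orthogonal remainder, and passed to the limit via the isometry $E|I_m(f)|^2=m!\|f\|^2_{\cH^{\otimes m}}$ together with Cauchy--Schwarz (convergence in $L^1(\Omega)$ of the products is enough to identify the limit, since the right-hand side converges in $L^2(\Omega)$). With that repair --- or with your alternative of establishing $\sum_{n\geq 0}\frac{t^n}{n!}I_n(h^{\otimes n})=\exp\big(tW(h)-\tfrac12 t^2\|h\|_{\cH}^2\big)$ in $L^2(\Omega)$ and comparing with \eqref{Hermite} --- your argument is complete.
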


We are interested in the asymptotic behavior of $X(t)$ for $t$ large. By the strong law of large numbers, $B_n/n \to 0$ a.s., and hence $X(n)=e^{n(B_n/n-1/2)} \to 0$ a.s. So with probability $1$, $X(n)$ is small when $n$ is large. But {\em on average}, $X(n)$ may not be so small if $n$ is large. More precisely, $E[X(t)^p]=\exp\{\frac{p(p-1)}{2}t\}$ for any $p>0$, using the fact that $E(e^{\lambda Z})=e^{\lambda^2\sigma^2/2 }$ for any $\lambda>0$, if $Z$ is a $N(0,\sigma^2)$ random variable. Hence for any $p>0$,
$$\lambda_p:=\lim_{t \to \infty}\frac{1}{t} \log E[X(t)^p]=\frac{p(p-1)}{2}.$$
$\lambda_p$ is called the {\em Lyapunov exponent} of order $p$ of the process $\{X(t)\}_{t \geq 0}$ and is related to the concept of intermittency from physics, which intuitively speaks about the average growth of $\{X(t)\}_{t \geq 0}$ on an exponential scale. Rigorously, we have the following definition:

\begin{definition}
\label{def-intermit}
{\rm The {\em upper Lyapunov exponent of order $p>0$} of a process $\{X(t)\}_{t \geq 0}$ is:
$$\overline{\lambda}_p:=\limsup_{t \to \infty}\frac{1}{t} \log E|X(t)|^p.$$
A process $\{X(t)\}_{t \geq 0}$ is {\em weakly intermittent} if $$\overline{\lambda}_2>0 \quad \mbox{and} \quad \overline{\lambda}_p<\infty \quad \mbox{for all $p\geq 2$},$$
and is
{\em fully intermittent} if the function $p \mapsto \overline{\lambda}_p/p$ is strictly increasing on $(0,\infty)$.
}
\end{definition}

\section{SPDEs with space-time white noise}
\label{section-Walsh}

In this section, we introduce Walsh's random field approach for solving SPDEs with space-time white noise.

\subsection{Walsh' approach: the case of space-time white noise}

Let $\{W(A);A \in \cB_{b}(\bR_{+} \times \bR^d)\}$ be a {\em space-time white noise} defined on a complete probability space $(\Omega,\cF,P)$. This is a zero-mean Gausssian process
with covariance
$$E[W(A)W(B)]={\rm Leb}(A \cap B),$$
where $\cB_{b}(\bR_{+} \times \bR^d)$ is the set of bounded Borel sets in $\bR_{+} \times \bR^d$, and ${\rm Leb}$ denotes the Lebesgue measure. The multi-parameter process $\{W(t,x)=W([0,t] \times [0,x]);t \geq 0, x \in \bR^d\}$ is called {\em Brownian sheet}. For any $x \in \bR^d$ fixed, $\{W(\cdot,x)\}_{t\geq 0}$ is a Brownian motion with variance ${\rm Leb}([0,x])$.

We proceed now with the construction of the stochastic integral with respect to the space-time white noise. We consider first the case of deterministic integrands. If $\varphi=1_A$ for some $A \in \cB_b(\bR_{+} \times \bR^d)$, we let
$I^{W}(\varphi)=W(A)$.
We extend this definition to linear combinations of indicator functions of this form. For such functions,
$$E[I^{W}(\varphi)I^{W}(\psi)]=\int_0^{\infty}\int_{\bR^d}\varphi(t,x)\psi(t,x)dtdx=\langle \varphi,\psi \rangle_{L^2(\bR^+ \times \bR^d)}.$$
This shows that $\varphi \mapsto I^{W}(\varphi) \in L^2(\Omega)$ is an isometry, which we extend to $L^2(\bR_{+} \times \bR^d)$. For any $\varphi \in L^2(\bR_{+} \times \bR^d)$, we denote
$$I^{W}(\varphi)=\int_{0}^{\infty}\int_{\bR^d}\varphi(t,x)W(dt,dx).$$

Next, we treat the case of random integrands. For this, let $\cF_{t}^W=\sigma(\{W_s(A); 0\leq s \leq t, A \in \cB_{b}(\bR^d)\}) \wedge \cN$, where $W_t(A)=W([0,t] \times A)$. We use the same idea as in the case of the Brownian motion.
For this we need the following definition.

\begin{definition}
{\rm A collection $\{M_t(A);t\geq 0, A \in \cB_b(\bR^d)\}$ of square-integrable random variables defined on a probability space $(\Omega,\cF,P)$ is a {\em martingale measure} (with respect to a filtration $(\cF_t)_t$) if:\\
{\em (a)} for any $A \in \cB_b(\bR^d)$ fixed, $\{M_t(A)\}_{t\geq 0}$ is a martingale with $M_0(A)=0$; \\
{\em (b)} for any $t>0$ fixed, $\{M_t(A)\}_{A \in \cB_b(\bR^d)}$ is a $\sigma$-finite $L^2(\Omega)$-valued signed measure, i.e. \\
{\em (i)} $M_t(A \cup B)=M_t(A)+M_t(B)$ a.s. for any disjoint sets $A,B \in \cB_b(\bR^d)$;\\
{\em (ii)} there exists a sequence $(E_k)_k$ in $\cB_b(\bR^d)$ with $E_k \uparrow R^d$ such that $\sup_{A \subset E_k}E|M_t(A)|^2< \infty$ for any $k$, and $E|M_t(A_n)|^2 \to 0$ for any $A_n \downarrow \emptyset$ with $A_n \subset E_k$ for all $n$, for some $k$.}
\end{definition}

The integrator $W$ is a martingale measure and the integral $I^W$ will also be a martingale measure. The simplest case is when the integrand $X$ is a random field of the form:
\begin{equation}
\label{elem-proc}
X(\omega,t,x)=Y(\omega) 1_{(a,b]}(t) 1_{H}(x),
\end{equation}
for some $0<a<b$, $H \in \cB_b(\bR^d)$ and an $\cF_a^W$-measurable bounded
random variable $Y$. In this case, we let
$$I_t^{W}(X)(A)=Y\big(W_{t \wedge b}(A \cap H)-W_{t \wedge a}(A \cap H) \big).$$
It can be proved that $\{I_t^W(X)(A);t\geq 0,A \in \cB_b(\bR^d)\}$ is a martingale measure with respect to $(\cF_t^W)_t$ and
\begin{equation}
\label{isometry2}
E|I_t^W(X)(A)|^2=E\int_0^t\int_{A}|X(s,x)|^2 dx ds.
\end{equation}

Let $\cS$ be the set of {\em simple random fields} on $\bR_{+} \times \bR^d$, i.e. linear combinations of processes of form \eqref{elem-proc}.  Let $\cL^W$ be the set of jointly measurable (measurable in $(\omega,t,x)$), $(\cF_t^W)_t$-adapted random fields $\{X(t,x)\}_{t \geq 0, x \in \bR^d}$ such that
$$[X]_t^2=E\int_0^t \int_{\bR^d}|X(s,x)|^2 dsdx<\infty \quad \mbox{for all} \quad t>0.$$
We endow $\cL^W$ with the norm
$$[X]=\sum_{k \geq 1}\frac{1\wedge [X]_k}{2^k}.$$
It can be proved that $\cS$ is dense in $\cL^W$. For any $X \in \cL^W$, $t >0$ and Borel set $A \subset \bR^d$, we can define an element $I_t^W(X)(A)$ in $L^2(\Omega)$ by approximating $X$ with a sequence $(X_n)_n$ in $\cS$. Then $\{I_t^W(X)(A);t \geq 0,A \in \cB_{b}(\bR^d)\}$ is a martingale measure and relation \eqref{isometry2} continues to hold for any $X \in \cL^W$.
This construction extends to $A=\bR^d$ and \eqref{isometry2} holds for this case too.
We use the notation $$I_t^W(X)(A)=\int_0^t \int_{A} X(s,x) W(ds,dx)$$ and we say that $I_t^W(X)$ is the {\em stochastic integral} (or {\em It\^o integral}) of $X$ with respect to $W$.

\subsection{The simplest SPDE: the linear equation}
\label{lin-eq-white}

Let $L$ be a second-order partial differential operator with constant coefficients on $\bR_{+} \times \bR^d$. We are interested primarily in the heat operator $L=\frac{\partial}{\partial t}-\frac{1}{2}\Delta$ and the wave operator $L=\frac{\partial^2}{\partial t^2}-\Delta$. Let $G$ be the fundamental solution of $L$, i.e. the solution of the equation
\begin{equation}
\label{fund-sol}
LG=\delta_0 \quad \mbox{in} \quad \cD'(\bR_{+} \times \bR^{d}),
\end{equation}
where $\cD'(\bR_+ \times \bR^d)$ is the space of distributions on $\bR_{+} \times \bR^d$.

We denote by $|\cdot|$ the Euclidean norm on $\bR^d$.
If $L$ is the heat operator,
\begin{equation}
\label{G-heat}
G(t,x)=\frac{1}{(2\pi t)^{d/2}}\exp \Big(-\frac{|x|^2}{2t}\Big).
\end{equation}
If $L$ is the wave operator,
\begin{align*}
G(t,x)&=\frac{1}{2}1_{\{|x|<t\}} \quad \mbox{if} \quad d=1\\
G(t,x)&=\frac{1}{2\pi} \frac{1}{\sqrt{t^2-|x|^2}} 1_{\{|x|<t\}}  \quad \mbox{if} \quad d=2\\
G(t,x)&=\frac{1}{4\pi} \sigma_t  \quad \mbox{if} \quad d=3,
\end{align*}
where $\sigma_t$ is the surface measure on the sphere $\{x \in \bR^3;|x|=t\}$. If $L$ is the wave operator in spatial dimension $d\geq 4$, $G(t,\cdot)$ is a distribution with compact support in $\bR^d$.

We consider the following linear SPDE:
\begin{equation}
\label{linear-spde}
Lu(t,x)=\dot{W}(t,x), \quad t>0,x \in \bR^d,
\end{equation}
with zero initial conditions.

A priori, we do not know if the maps $t\mapsto u(\omega,t,x)$ and $x\mapsto u(\omega,t,x)$ are differentiable, so in general, $Lu(\omega,t,x)$ is not well-defined.
By definition, the {\em mild solution} to \eqref{linear-spde} is given by:
\begin{equation}
\label{sol-linear}
u(t,x)=\int_0^t \int_{\bR^d}G(t-s,x-y)W(ds,dy),
\end{equation}
provided that the stochastic integral is well-defined, i.e.
\begin{equation}
\label{cond-G}
\int_0^t \int_{\bR^d}G^2(t-s,x-y)dyds<\infty.
\end{equation}
In the case of the heat and wave equations, this reduces to asking that $d=1$.

Why a process $u$ given by \eqref{sol-linear} is called a mild solution? The reason for this is a formal manipulation. Formally, we replace $\dot{W}$ in \eqref{linear-spde} by a non-random function $f$. From the classical PDE theory, we know that the solution of $Lu=f$ with zero initial conditions is
$u(t,x)=\int_0^t \int_{\bR^d}G(t-s,x-y)f(s,y)dyds$. Next, replacing back $f$ by $\dot{W}$, we argue that $\dot{W}(s,y)dsdy$ should be (formally) the same as $W(ds,dy)$.

\begin{remark}
\label{Dalang-rem}
{\rm In \cite{dalang99}, Dalang constructed a stochastic integral with respect to a more general noise, called {\em spatially homogeneous Gaussian noise}. This noise is given by a zero-mean Gaussian process $\{M(A);A \in \cB_b(\bR_{+} \times \bR^d)\}$ with covariance
\begin{equation}
\label{Dalang-cov}
E[M(A)M(B)]=\int_{\bR_{+}} \int_{\bR^d}\int_{\bR^d} 1_{A}(t,x)1_{B}(t,x) f(x-y)dx dy dt,
\end{equation}
for a non-negative function $f$ on $\bR^d$ which is non-negative definite in the sense of distributions (and may be $\infty$ at $0$). In this case, by Bochner-Schwartz theorem, $f$ is the Fourier transform of a tempered measure $\mu$ on $\bR^d$.
With this type of noise, the linear heat and wave equations have random-field solutions in any spatial dimension $d\geq 1$, provided that
\begin{equation}
\label{Dalang-cond}
\int_{\bR^d}\frac{1}{1+|\xi|^2}\mu(d\xi)<\infty.
\end{equation}
Condition \eqref{Dalang-cond} was introduced simultaneously in articles \cite{dalang99} and \cite{PZ00}, and is known in the literature as {\em Dalang's condition}. Formally, the case of the space-time white noise corresponds to $f=\delta_0$ and $\mu$ equal to the Lebesgue measure. An interesting example arising from potential theory is when $f$ is the {\em Riesz kernel} of order $\alpha$, i.e.
\begin{equation}
\label{Riesz-def}
f(x)=|x|^{-\alpha}, \quad \alpha \in (0,d).
\end{equation}
In this case, $f$ is the Fourier transform in the space $\cS'(\bR^d)$ of tempered distributions on $\bR^d$ of the measure $\mu(d\xi)=c_{\alpha,d}|\xi|^{-d+\alpha}d\xi$, where $c_{\alpha,d}>0$ is a constant depending on $\alpha$ and $d$ (see page 117 of \cite{stein70}). For this example, condition \eqref{Dalang-cond} becomes $\alpha<d \wedge 2$.
}
\end{remark}

\subsection{How complications arise: the non-linear equation}

Assume \eqref{cond-G} holds. Consider now the following non-linear equation:
\begin{equation}
\label{non-lin}
Lu(t,x)=\sigma(u(t,x))\dot{W}(t,x) \quad t>0,x \in \bR^d,
\end{equation}
with zero initial conditions, where $\sigma$ is a globally Lipschitz function.

By definition, a {\em mild solution} to \eqref{non-lin} satisfies:
$$u(t,x)=\int_0^t \int_{\bR^d}G(t-s,x-y)\sigma(u(s,y))W(ds,dy),$$
provided that the stochastic integral is well-defined. Note that a sufficient condition for this integral to be well-defined is that $u$ is measurable and $(\cF_t^W)_t$-adapted and
$$\sup_{(t,x) \in [0,T \times \bR^d]}E|u(t,x)|^2<\infty \quad \mbox{for all} \quad T>0.$$

We say that processes $\{X(t,x)\}_{t,x}$ and $\{Y(t,x)\}_{t,x}$ are {\em modifications} of each other if $P(X(t,x)=Y(t,x))=1$ for all $t\geq 0$ and $x\in \bR^d$.
The next result establishes the existence and uniqueness of this solution.

\begin{theorem}
If \eqref{cond-G} holds, then there exists a mild solution to equation \eqref{non-lin} and this solution is unique (up to a modification).
\end{theorem}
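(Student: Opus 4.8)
The plan is to follow the proof of Theorem~\ref{sol-SDE-th} almost verbatim, with the It\^o isometry \eqref{isometry} replaced by the martingale-measure isometry \eqref{isometry2}, and the elementary Gronwall Lemma~\ref{gronwall} replaced by a version that handles a convolution kernel. Throughout, write $g(t)=\int_{\bR^d}G^2(t,x)\,dx$; by \eqref{cond-G} we have $\int_0^T g(s)\,ds<\infty$ for every $T>0$, and for the heat and wave kernels in dimension $d=1$ one in fact has $g(t)\le c\,t^{\gamma-1}$ with $\gamma>0$ ($\gamma=1/2$ for the heat equation, $\gamma=2$ for the wave equation).

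For the existence part, I would set up the Picard scheme $u_0(t,x)=0$ and
$$u_{n+1}(t,x)=\int_0^t\int_{\bR^d}G(t-s,x-y)\,\sigma(u_n(s,y))\,W(ds,dy),\qquad n\ge 0,$$
and prove by induction on $n$ that each $u_n$ is well-defined, jointly measurable and $(\cF_t^W)_t$-adapted (after passing to a jointly measurable modification, exactly as in the proof of Theorem~\ref{sol-SDE-th}), and that $N_n(t):=\sup_{x\in\bR^d}E|u_n(t,x)|^2<\infty$ for every $t\ge 0$. The inductive step uses \eqref{isometry2} together with the linear growth $|\sigma(x)|^2\le D_\sigma^2(1+|x|^2)$:
$$E|u_{n+1}(t,x)|^2=\int_0^t\int_{\bR^d}G^2(t-s,x-y)\,E|\sigma(u_n(s,y))|^2\,dy\,ds\le D_\sigma^2\int_0^t g(t-s)\big(1+N_n(s)\big)\,ds,$$
the right-hand side being independent of $x$. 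Then, fixing $T>0$ and setting $H_{n+1}(t)=\sup_x E|u_{n+1}(t,x)-u_n(t,x)|^2$, the isometry \eqref{isometry2} and the Lipschitz property \eqref{Lip} give
$$H_{n+1}(t)\le C_\sigma^2\int_0^t g(t-s)\,H_n(s)\,ds,\qquad t\in[0,T].$$
Iterating this bound and using that $H_1$ is bounded on $[0,T]$, one obtains $H_{n+1}(t)\le C_\sigma^{2n}\,\|H_1\|_{L^\infty[0,T]}\int_0^t g^{*n}(s)\,ds$, and the estimate $g(t)\le c\,t^{\gamma-1}$ makes $\int_0^T g^{*n}(s)\,ds$ decay like $c^n\,\Gamma(\gamma)^n\,T^{n\gamma}/\Gamma(n\gamma+1)$, so that $\sum_{n\ge1}\sup_{t\le T}H_n^{1/2}(t)<\infty$. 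From here the argument is identical to the SDE case: $\{u_n(t,x)\}_n$ is Cauchy in $L^2(\Omega)$ uniformly in $(t,x)\in[0,T]\times\bR^d$; its limit $u$ is jointly measurable and $(\cF_t^W)_t$-adapted (after modification), satisfies $\sup_{(t,x)\in[0,T]\times\bR^d}E|u(t,x)|^2<\infty$, and passing to the $L^2(\Omega)$-limit in the recursion shows that $u$ is a mild solution of \eqref{non-lin}. For uniqueness, given two mild solutions $u,v$, I would set $\rho(t)=\sup_x E|u(t,x)-v(t,x)|^2$ (finite by the a priori $L^2$ bound), obtain $\rho(t)\le C_\sigma^2\int_0^t g(t-s)\,\rho(s)\,ds$ from \eqref{isometry2} and \eqref{Lip}, and conclude $\rho\equiv0$ on every $[0,T]$ by the same iterated-convolution estimate; hence $u(t,x)=v(t,x)$ a.s. for all $t\ge0$ and $x\in\bR^d$.

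The main obstacle is precisely the convolution structure of the recursion: unlike in Lemma~\ref{gronwall}, the inequality for $H_n$ involves the kernel $g(t-s)$ rather than a plain time integral, so one needs the iterated-convolution bound above (a Gronwall--Henry type lemma), and this is the only place where the precise size of $g$ near $t=0$ --- condition \eqref{cond-G} refined to $g(t)\le c\,t^{\gamma-1}$ --- is genuinely used. The other mild nuisance, absent in the SDE case, is verifying joint measurability and adaptedness of the Picard iterates and the existence of a jointly measurable modification of the limit; this requires a little extra bookkeeping but is routine, since $g(t-s)$ does not depend on $x$ and all the relevant bounds are uniform in $x$ by the translation invariance of $W$ and of $G$.
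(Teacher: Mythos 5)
Your proposal is correct and follows the same strategy as the paper: Picard iteration, the isometry \eqref{isometry2} plus the Lipschitz bound \eqref{Lip} leading to the convolution recursion $H_{n+1}(t)\le C_\sigma^2\int_0^t g(t-s)H_n(s)\,ds$ with $g(s)=\int_{\bR^d}G^2(s,y)\,dy$, then summing the increments and passing to the limit in the recursion, with uniqueness by the same convolution-Gronwall argument. The one genuine difference is how that recursion is resolved. The paper invokes Dalang's extension of the Gronwall lemma (Lemma~\ref{Dalang-lemma}, i.e.\ Lemma~15 of \cite{dalang99}), which requires \emph{only} that $g$ be non-negative and integrable on $[0,T]$ --- exactly what \eqref{cond-G} provides --- and obtains the summability of $\sup_{t\le T}f_n^{1/p}(t)$ via a probabilistic representation of the iterated kernels ($a_n=G(T)^nP(S_n\le T)$ with i.i.d.\ sampling from $g/G(T)$). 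You instead iterate the convolution directly and control $\int_0^T g^{*n}$ through the pointwise bound $g(t)\le c\,t^{\gamma-1}$, yielding the $\Gamma(n\gamma+1)$ decay (a Gronwall--Henry estimate). Your computation is correct, and the bound $g(t)\le c\,t^{\gamma-1}$ does hold in the cases where \eqref{cond-G} is satisfied for the heat and wave operators ($d=1$, with $\gamma=1/2$ and $\gamma=2$ respectively), so your proof covers the situations the paper cares about; but note that the theorem is stated for a general constant-coefficient second-order $L$ under the sole hypothesis \eqref{cond-G}, and your argument uses the extra power-law refinement of \eqref{cond-G}, which is not a formal consequence of it. Dalang's lemma buys exactly that extra generality (any integrable $g$), as well as summability of $\sup_{t\le T}f_n^{1/p}(t)$ for \emph{all} $p>0$, which the paper exploits to get convergence in $L^p(\Omega)$ for every $p\ge 2$ rather than only in $L^2(\Omega)$; your elementary route buys self-containedness at the cost of these two points.
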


\begin{proof} As in the proof of Theorem \ref{sol-SDE-th}, we set up a Picard's iteration scheme. For any $t\geq 0$ and $x \in \bR^d$, let $u_0(t,x)=0$ and set
\begin{equation}
\label{def-Picard2}
u_{n+1}(t,x)=\int_0^t \int_{\bR^d}G(t-s,x-y)\sigma(u_n(s,y))W(ds,dy),
\end{equation}
for any $n\geq 0$. The following property is proved by induction on $n \geq 0$:
\begin{equation}
\label{propertyP}
\tag{P}
\left\{
\begin{array}{rcl}
& (i)  &  \ \displaystyle u_n(t,x) \ \mbox{is well-defined for any} \ t \geq 0,x \in \bR^d;  \\[1ex]
& (ii)  &  \displaystyle \sup_{(t,x) \in [0,T] \times \bR^d}E|u_n(t,x)|^{2}<\infty; \ \mbox{for any $T>0$}, \\ [1ex]
& (iii) & \displaystyle (t,x) \mapsto u_n(t,x) \ \mbox{is $L^2(\Omega)$-continuous}; \\ [1ex]
& (iv) & \displaystyle u_n(t,x) \ \mbox{is $\cF_t^W$-measurable for any $t\geq 0, x \in \bR^d$}.
\end{array} \right.
\end{equation}
In particular, from (iii) it follows that $\{u_n(t,x)\}_{t,x}$ has a measurable modification $\{\widetilde{u}_n(t,x)\}_{t,x}$. In fact, we work with this modification for defining $u_{n+1}(t,x)$, but to simplify the notation we denote it also by $\{u_n(t,x)\}_{t,x}$.

Fix $T>0$ and $p\geq 2$. We show that $\{u_n(t,x)\}_n$ converges in $L^p(\Omega)$ to $u(t,x)$, uniformly in $(t,x) \in [0,T] \times \bR^d$. To see this, let
$$M_n(t)=\sup_{x \in \bR^d}E|u_n(t,x)-u_{n-1}(t,x)|^2.$$
By the isometry property \eqref{isometry2} of the stochastic integral with respect to $W$ and the Lipschitz property \eqref{Lip} of $\sigma$,
\begin{align*}
E|u_{n+1}(t,x)-u_n(t,x)|^2 &=\int_0^t \int_{\bR^d}G^2(t-s,x-y) E|\sigma(u_n(s,y))-\sigma(u_{n-1}(s,y))|^2 dyds\\
& \leq C_{\sigma}^2 \int_0^t \int_{\bR^d}G^2(t-s,x-y) E|u_n(s,y)-u_{n-1}(s,y)|^2 dyds\\
&\leq C_{\sigma}^2 \int_0^t \sup_{y \in \bR^d}E|u_n(s,y)-u_{n-1}(s,y)|^2 \
\left( \int_{\bR^d}G^2(t-s,x-y)dy \right) ds \\
&= C_{\sigma}^2 \int_0^t M_n(s) g(t-s) ds,
\end{align*}
where $g(s)=\int_{\bR^d}G^2(s,y)dy$. Taking the supremum over all $x \in \bR^d$, we arrive at the following recurrence relation:
$$M_{n+1}(t)\leq C_{\sigma}^2 \int_0^t M_n(s) g(t-s) ds,$$
{\em which is not covered by the classical Gronwall lemma!} In \cite{dalang99}, Dalang developed a very nice extension of Gronwall lemma, which is suitable for tackling this problem. This is stated as Lemma \ref{Dalang-lemma} below. Using this lemma, it follows that
$\sum_{n\geq 1}\sup_{t \leq T}H_n^{1/p}(t)<\infty$. We denote by $\|\cdot\|_p$ the norm in $L^p(\Omega)$. Then
$$\sup_{(t,x) \in [0,T] \times \bR^d}\|u_m(t,x)-u_{n}(t,x)\|_p \leq \sum_{k=n+1}^{m}\|u_k(t,x)-u_{k-1}(t,x)\|_p \leq \sum_{k=n+1}^m H_k^{1/p}(t,x) \to 0,$$
as $n,m \to \infty$. This proves that $\{u_n(t,x)\}_{n}$ is a Cauchy sequence in $L^p(\Omega)$, uniformly in $(t,x)\in [0,T] \times \bR^d$.
If we denote by $u(t,x)$ the limit of this sequence, then $\{u(t,x)\}_{t\geq 0,x \in \bR^d}$ is the solution to equation \eqref{SDE}; to see this, simply take the limit (in $L^p(\Omega)$) as $n\to \infty$ in \eqref{def-Picard2}.

We now prove uniqueness. Let $\{v(t,x)\}_{t\geq 0,x \in \bR^d}$ be another solution of \eqref{SDE}, and  $M(t)=\sup_{x \in
\bR^d}E|u(t,x)-v(t,x)|^2$. Then
\begin{align*}
E|u(t,x)-v(t,x)|^2& =\int_0^t \int_{\bR^d}G^2(t-s,x-y)E |\sigma(u(s,y))-\sigma(v(s,y))|^2 dy ds \\
& \leq C_{\sigma}^2 \int_0^t \int_{\bR^d}G^2(t-s,x-y)E|u(s,y)-v(s,y)|^2 dy ds \\
&\leq C_{\sigma}^2 \int_0^t M(s)g(t-s)ds.
\end{align*}
Taking the supremum over $x \in \bR^d$, we obtain: $M(t) \leq C_{\sigma}^2 \int_0^t M(s)g(t-s)ds$, and hence, by Lemma \ref{Dalang-lemma} below, $E|u(t,x)-v(t,x)|^2=0$ for all $t \geq 0$ and $x \in \bR^d$. This proves that $u(t,x)=v(t,x)$ a.s. for all $t \geq 0$ and $x \in \bR^d$.
\end{proof}

\begin{lemma}[Lemma 15 of \cite{dalang99}]
\label{Dalang-lemma}
(Extension of Gronwall Lemma)
Let $f_n:[0,T] \to [0,\infty)$ be such that
$$f_{n+1}(t) \leq \int_0^t f_n(s)g(t-s)ds,$$
for all $t \in [0,T]$ and $n\geq 0$, for a non-negative function $g$ which is integrable on $[0,T]$. Suppose that $f_0(t) \leq M$ for all $t \in [0,T]$. Then for all $n \geq 0$ and $t \in [0,T]$,
$$f_n(t) \leq M a_n,$$
where $(a_n)_n$ is a sequence of positive numbers with the property that $\sum_{n}a_n^{1/p}<\infty$ for all $p>0$.  In particular, $\sum_{n\geq 1}\sup_{t \leq T}f_n^{1/p}(t)<\infty$ for all $p>0$. (More precisely, $a_n=G(T)^n P(S_n\leq T)$, where $G(T)=\int_0^T g(s)ds$ and $S_n=\sum_{i=1}^{n}X_i$, with $(X_i)_{i\geq 1}$ i.i.d. random variables on $[0,T]$ with density $g(s)/G(T)$.)
\end{lemma}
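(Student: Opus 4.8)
The plan is to iterate the recursion, recognise the resulting $n$-fold iterated integral as $G(T)^n$ times the distribution function of a sum of i.i.d.\ random variables, and then deduce the summability of the $a_n$ from a Chernoff bound.

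\emph{Step 1: iterating the recursion.} Extend $g$ by $0$ outside $[0,T]$ and let $g^{*n}$ denote its $n$-fold convolution (with $g^{*1}=g$). I would prove by induction on $n\geq 1$ that
$$f_n(t)\leq M\int_0^t g^{*n}(s)\,ds\qquad\text{for all }t\in[0,T].$$
The case $n=1$ is immediate from $f_0\leq M$. For the inductive step, write $\int_0^s g^{*n}(r)\,dr=(\mathbf 1*g^{*n})(s)$, where $\mathbf 1$ is the constant function on $[0,\infty)$ and $*$ denotes convolution on $[0,\infty)$; substituting the inductive bound into $f_{n+1}(t)\leq\int_0^t f_n(s)g(t-s)\,ds$ and using commutativity and associativity of convolution gives $f_{n+1}(t)\leq M\,(\mathbf 1*g^{*(n+1)})(t)=M\int_0^t g^{*(n+1)}(s)\,ds$. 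The case $n=0$ is the hypothesis $f_0\leq M$, corresponding to $a_0=1$.

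\emph{Step 2: probabilistic reinterpretation.} We may assume $M>0$ and $G(T)=\int_0^T g(s)\,ds>0$, since if $G(T)=0$ then $g=0$ a.e.\ on $[0,T]$ and $f_n\equiv 0$ for $n\geq 1$. Let $p=g/G(T)$, a probability density supported on $[0,T]$, let $(X_i)_{i\geq 1}$ be i.i.d.\ with density $p$, and put $S_n=X_1+\cdots+X_n$. Since the density of $S_n$ is $p^{*n}=g^{*n}/G(T)^n$, we get $\int_0^t g^{*n}(s)\,ds=G(T)^n\,P(S_n\leq t)$, whence
$$f_n(t)\leq M\,G(T)^n\,P(S_n\leq T)=:M\,a_n\qquad\text{for all }t\in[0,T].$$

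\emph{Step 3: decay of $a_n$.} Because $E[X_1]>0$, $S_n$ drifts to $+\infty$ and $P(S_n\leq T)\to 0$; to make this quantitative I would use, for any $\lambda>0$, the Chernoff bound $P(S_n\leq T)=P(e^{-\lambda S_n}\geq e^{-\lambda T})\leq e^{\lambda T}\bigl(E[e^{-\lambda X_1}]\bigr)^n$, which gives
$$a_n\leq e^{\lambda T}\Bigl(G(T)\,E[e^{-\lambda X_1}]\Bigr)^n=e^{\lambda T}\Bigl(\int_0^T e^{-\lambda s}g(s)\,ds\Bigr)^{n}.$$
By dominated convergence $\int_0^T e^{-\lambda s}g(s)\,ds\to 0$ as $\lambda\to\infty$ (the integrand is bounded by $g\in L^1[0,T]$ and tends to $0$ pointwise on $(0,T]$), so I would fix $\lambda$ so large that $\rho:=\int_0^T e^{-\lambda s}g(s)\,ds<1$. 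Then $a_n\leq C\rho^n$ with $C=e^{\lambda T}$, hence for every $p>0$ we have $\sum_n a_n^{1/p}\leq C^{1/p}\sum_n\rho^{n/p}<\infty$; since $\sup_{t\leq T}f_n^{1/p}(t)\leq(Ma_n)^{1/p}$, also $\sum_{n\geq 1}\sup_{t\leq T}f_n^{1/p}(t)<\infty$. This proves the lemma, with $a_n$ as announced in the statement.

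The only genuine idea here is the one already recorded in the statement, namely the identity $\int_0^t g^{*n}(s)\,ds=G(T)^n P(S_n\leq t)$; once it is in place, the obstruction that kills the classical Gronwall argument (that $\int_0^t g(t-s)\,ds$ need not be bounded by a constant multiple of $t$) disappears, and the Chernoff estimate produces decay fast enough to absorb every power $1/p$. I do not expect a serious obstacle; the points requiring a little care are the convolution bookkeeping in Step 1, treating $g$ as a fixed element of $L^1[0,T]$ so that $p$ and the convolutions are well defined, and the dominated-convergence limit in Step 3.
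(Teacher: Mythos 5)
Your proof is correct. Note that the paper itself does not prove this lemma; it is quoted as Lemma 15 of \cite{dalang99}, and the parenthetical in the statement already records the intended construction $a_n=G(T)^n P(S_n\leq T)$. Your argument fleshes out exactly that route: the induction $f_n(t)\leq M\int_0^t g^{*n}(s)\,ds$, the identification $\int_0^t g^{*n}(s)\,ds=G(T)^n P(S_n\leq t)$, and then summability. The one ingredient you supply beyond the hint is the Chernoff bound $a_n\leq e^{\lambda T}\rho^n$ with $\rho=\int_0^T e^{-\lambda s}g(s)\,ds<1$ for $\lambda$ large, which gives geometric decay and hence $\sum_n a_n^{1/p}<\infty$ for every $p>0$; this is a clean quantitative substitute for the estimate of $P(S_n\leq T)$ in Dalang's original argument, and it delivers more than the lemma actually needs. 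Your handling of the degenerate cases ($M=0$ or $G(T)=0$) and of the extension of $g$ by zero is also fine.
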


\subsection{Parabolic Anderson Model}

In this section, we consider the case of the heat equation in the particular case $\sigma(x)=x$, with non-vanishing initial conditions. More precisely, we look at the equation:
\begin{equation}
\label{PAM}
\frac{\partial u}{\partial t}(t,x)=\frac{1}{2}\Delta u(t,x)+u(t,x)\dot{W}(t,x) \quad t>0,x \in \bR^d
\end{equation}
with an initial condition $u(0,x)=1$. This equation is known in the literature as {\em the Parabolic Anderson Model} with space-time white noise.
By definition, the mild solution to \eqref{PAM} satisfies:
\begin{equation}
\label{multip-eq}
u(t,x)=1+\int_0^t \int_{\bR^d}G(t-s,x-y)u(s,y)W(ds,dy).
\end{equation}

This equation can be solved similarly to \eqref{SDE-multip}, but in this case we do not obtain such an explicit formula for the solution. By introducing $$u(s,y)=1+\int_0^s \int_{\bR^d}G(s-r,y-z)u(r,z)W(dr,dz)$$ on the right-hand side of \eqref{multip-eq} and iterating this procedure, we obtain the following series representation:
\begin{equation}
\label{PAM-series}
u(t,x)=1+\sum_{n\geq 1}I_n^{W}(f_n(\cdot,t,x)),
\end{equation}
where $I_n^W$ is the multiple integral of order $n$ with respect to $W$ (defined using Malliavin calculus; see \cite{nualart06}), and the function $f_n(\cdot,t,x)$ is given by:
\begin{equation}
\label{def-fn}
f_n(t_1,x_1,\ldots,t_n,x_n,t,x)=G(t-t_n,x-x_n) \ldots G(t_2-t_1,x_2-x_1)1_{\{0<t_1<\ldots<t_n<t\}}.
\end{equation}

It can be proved that the integral $I_n^W$ is well-defined on $L^2((\bR_{+} \times \bR^d)^n)$, and it is not hard to see that $f_n(\cdot,t,x) \in L^2((\bR_{+} \times \bR^d)^n)$. Moreover,
\begin{align*}
E|I_n^W(f_n(\cdot,t,x))|^2&:=\int_{\{0<t_1<\ldots<t_n<t\}}G^2(t-t_n,x-x_n) \ldots G^2(t_2-t_1,x_2-x_1)dt_1 dx_1 \ldots dt_n dx_n \\
&=(4\pi)^{-n/2} \int_{\{0<t_1<\ldots<t_n<t\}} [(t-t_n)\ldots (t_2-t_1)]^{-1/2}dt_1 \ldots dt_n\\
&=\frac{(t/2)^{n/2}}{\Gamma(n/2+1)}.
\end{align*}
Here, we used the fact that $G^2(t,x)=(4\pi t)^{-1/2}G(t/2,x)$ and $G(t,\cdot)$ is a density function on $\bR^d$. (Recall the form \eqref{G-heat} of $G$.)

Using Malliavin calculus techniques (see \cite{nualart06}), it can be proved that the elements in the series \eqref{PAM-series} are orthogonal in $L^2(\Omega)$. Therefore, we obtain an explicit calculation for the second moment of $u(t,x)$:
\begin{align*}
E|u(t,x)|^2&=1+\sum_{n\geq 1}E|I_n^W(f_n(\cdot,t,x))|^2=1+\sum_{n\geq 1}\frac{(t/2)^{n/2}}{\Gamma(n/2+1)}=2 e^{t/4} \Phi(\sqrt{t/2}),
\end{align*}
where $\Phi$ is the standard normal distribution function. The last equality is due to Lemma 2.3.4 of \cite{chen13}. Hence,
$$\lambda_2:=\lim_{t \to \infty}\frac{1}{t}\log E|u(t,x)|^2=\frac{1}{4}.$$

For moments of higher order, Bertini and Cancrini proved in \cite{bertini-cancrini95} that:
$$\lambda_p:=\lim_{t \to \infty}\frac{1}{t} \log E|u(t,x)|^p=\frac{1}{4!}p(p^2-1).$$
Hence, $u$ is fully intermittent (in the sense of Definition \ref{def-intermit}).

\section{Stochastic analysis for fBm}
\label{section-fBm}

In this section, we introduce the fBm and we offer a glimpse at the challenges of stochastic analysis with respect to fBm.

\subsection{A new arrival in this story: fractional Brownian motion}

Let $H \in (0,1)$ be arbitrary. The {\em fractional Brownian motion} (fBm) of index $H$ is
a zero-mean Gaussian process $(B_t^{H})_{t\geq 0}$ with covariance
$$E[B_t^H B_s^H]=\frac{1}{2}(t^{2H}+s^{2H}-|t-s|^{2H})=:R_{H}(t,s).$$
The parameter $H$ is called the {\em Hurst index}. If $H=1/2$, $(B_t^H)_{t\geq 0}$ is the Brownian motion.

\vspace{3mm}
{\bf Comment about the notation:} $B_t^H$ is just a notation, which is commonly used in the literature; $B_t^H$ is certainly not equal to the power $H$ of the Brownian motion $B_t$.
\vspace{3mm}

Since late 1990's, the fBm has been used increasingly in stochastic analysis as a replacement for the Brownian motion. One of its appealing features is the flexibility given by the choice of the index $H$. In 2003, David Nualart published a very nice survey \cite{nualart03} which reviews the properties of fBm and explains several methods for developing a stochastic calculus with respect to this process. This remains a landmark reference to this day. We include below some historical remarks and key properties of fBm taken from \cite{nualart03}.

The fBm was introduced by Kolmogorov in \cite{kolmogorov40}, who called it the ``Wiener spiral''. Nobody uses this name today. He proved that $R_H$ is non-negative definite, using a representation of the form:
$$R_{H}(t,s)=\int_0^{t \wedge s}K_{H}(t,r)K_H(r,s)dr,$$
for a certain kernel $K_H$, which has different forms for $H<1/2$ and $H>1/2$. From this, we deduce that the fBm can be represented as $B_t^H=\int_0^t K_{H}(t,s)dB_s$, where $(B_t)_{t\geq 0}$ is the Brownian motion.

The name ``fractional Brownian motion'' was coined by Madelbrott and Van Ness in
\cite{mandelbrot-vanness} who obtained the ``moving average representation'' of the fBm:
$$B_t^H=\frac{1}{c_H} \int_{\bR}[(t-s)_{+}^{H-1/2}-(-s)_{+}^{H-1/2}] dB(s),$$
where $c_H>0$ is a constant depending on $H$ and $(B_s)_{s \in \bR}$ is the Brownian motion on $\bR$.

The increments of fBm are not independent.
The name of the index $H$ comes from Hurst who studied in \cite{hurst51} the water run-offs of the Nile river, and concluded that since his data points were correlated, they should be regarded as increments of the fractional Brownian motion.

For any $s<t$, $B_t^H-B_s^H$ has a $N(0,(t-s)^{2H})$ distributions. Hence,
the fBm has stationary increments, and is self-similar of order $H$, i.e. for any $a>0$, the processes $(B_{at}^H)_{t \geq 0}$ and $(a^H B_t)_{t \geq 0}$ have the same distribution.

The process $(B_t^H)_{t \geq 0}$ has a modification $(\widetilde{B}_t^H)_{t \geq 0}$ whose sample paths are  H\"older continuous of order $H-\varepsilon$ for any $\varepsilon>0$. This follows by Kolmogorov's criterion, since
$$\Big(E|B_t^{H}-B_s^H|^p\Big)^{1/p} =c_p \Big(E|B_t^H-B_s^H|^2 \Big)^{1/2}=c_p|t-s|^H,$$
where $c_p=(E|Z|^p)^{1/p}$ and $Z$ has a $N(0,1)$-distribution.
In other words, $(\widetilde{B}_t^H)_{t \geq 0}$ has smoother sample paths than Brownian motion if $H>1/2$, and rougher paths if $H<1/2$.

Most importantly, the {\em fBm is not a martingale}. To see this, note that $\sum_{j=1}^{n}|B_{j/n}^H-B_{(j-1)/n}^H|^2$ converges almost surely to $0$ if $H>1/2$ and to $\infty$ if $H<1/2$. (Recall that if $(M_t)_{t\geq 0}$ is a square-integrable martingale with $M_0=0$, then $\sum_{j=1}^{n}|M_{jt/n}-M_{(j-1)t/n}|^2 \stackrel{P}{\to}\langle M \rangle_t$, where $\langle M \rangle$ is the quadratic variation of $M$.)

\subsection{Integration with respect to fBm with $H>1/2$}
\label{integral-fBm}

We fix $T>0$. The goal of this section is to define the integral $\int_{0}^T \varphi(t)dB^H(t)$.

Suppose that $H>1/2$. In this case, it can be proved that
$$R_H(t,s)=\alpha_H \int_0^t \int_0^s |u-v|^{2H-2}dudv,$$
with $\alpha_H=H(2H-1)$; see \cite{nualart03}. Hence
\begin{equation}
\label{fBm-cov}
E[B_t^H B_s^H]=\int_0^{T} \int_{0}^{T}1_{(0,t]}(u)1_{(0,s]}(v)|u-v|^{2H-2}dudv=: \langle 1_{(0,t]},1_{(0,s]} \rangle_{\cU}.
\end{equation}

Similarly to the Brownian motion case, we define $B^H(1_{(0,t]})=B_t^H$ and we extend this definition by linearity to simple functions. The map $1_{(0,t]} \mapsto B_t^H \in L^2(\Omega)$ is an isometry which can be extended to the Hilbert space $\cU$, defined as the completion of the set of simple functions with respect to the inner product
$\langle \cdot,\cdot \rangle_{\cU}$. We define in this way the isometry $B^H: \cU \mapsto L^2(\Omega)$:
$$E|B^H(\varphi)|^2=\|\varphi\|_{\cU}^2 \quad \mbox{for all} \ \varphi \in \cU.$$

It can be proved that $\cU$ contains elements $\varphi$ from the space $\cS'(\bR)$ of tempered distributions on $\bR$, whose Fourier transform $\cF \varphi$ is a function. More precisely, these elements belong to the Sobolev space $W^{-(H-1/2),2}(\bR)$ of order $-(H-1/2)$ and the inner product in $\cU$ can be expressed as:
$$\langle \varphi, \psi \rangle_{\cU}=c_H\int_{\bR}\cF \varphi(\xi) \overline{\cF \psi(\xi)}|\xi|^{1-2H}d\xi,$$
where $c_H>0$ is a constant depending on $H$ (see Proposition 4.1 of \cite{jolis10}). But $\cU$ contains several nice function spaces:
$$L^2([0,T]) \subset L^{1/H}([0,T]) \subset |\cU| \subset \cU,$$
where $|\cU|$ is the set of measurable functions $\varphi:[0,T] \to \bR$ such that
$$\|\varphi\|_{|\cU|}^2:=\alpha_H \int_{0}^T \int_0^T |\varphi(u)| |\varphi(v)||u-v|^{2H-2}dudv<\infty.$$
So if $\varphi$ is a function in one of these subspaces of $\cU$,
$$E[B^H(\varphi)B^H(\psi)]=\langle \varphi,\psi \rangle_{\cU}=\alpha_H \int_0^T \int_0^T \varphi(t) \psi(s)|t-s|^{2H-2}dtds.$$

\begin{remark}
\label{Fourier-rem}
{\rm
For the sake of a generalization which we will discuss later, note that the function $\gamma(t)=|t|^{2H-2}$ is the Riesz kernel \eqref{Riesz-def} of order
$\alpha=2-2H$ in dimension $d=1$. So, $\gamma$ is the Fourier transform (in the space $\cS'(\bR)$ of tempered distributions on $\bR)$ of the tempered measure
$\nu(d\tau)=C_{H}|\tau|^{1-2H}d\tau$, where $C_H>0$ is a constant depending on $H$.
}
\end{remark}

What about the case of random integrands? Since the fBm is not a martingale, we cannot use It\^o's theory. Instead of this, we will use {\em Malliavin calculus} (see e.g. \cite{nualart06}). Other methods exist in the literature, for instance defining a pathwise integral, which exploits the H\"older continuity of the sample paths of the fBm that we mentioned above. We will not discuss these methods here.

Note that $\{B^H(\varphi);\varphi \in \cU\}$ is an {\em isonormal Gaussian process}, i.e. a zero-mean Gaussian process with covariance given by the inner product in a Hilbert space: for any $\varphi,\psi \in \cU$,
$$E[B^H(\varphi)B^{H}(\psi)]=\langle \varphi, \psi \rangle_{\cU}.$$

The starting point of the construction of the integral is the Malliavin derivative. Let $F$ be a ``smooth'' random variable. i.e. a random variable of the form $$F=f(B^{H}(\varphi_1),\ldots,B^H(\varphi_n))$$ for some function $f \in C_{b}^{\infty}(\bR^n)$, $n \geq 1$ and $\varphi_1,\ldots,\varphi_n \in \cU$, where
$C_b^{\infty}(\bR^n)$ is the set of infinitely differentiable functions on $\bR^n$ with bounded partial derivatives. The {\em Malliavin derivative} of $F$ is defined as the following (random) element in $\cU$:
$$D F:=\sum_{i=1}^{n}\frac{\partial f}{\partial x_i}(B^{H}(\varphi_1),\ldots,B^H(\varphi_n)) \varphi_i.$$
It can be proved that $E\|D F\|_{\cU}^2<\infty$. This definition can be extended to space $\bD^{1,2}$ defined as the completion of the set of smooth random variables with respect to the norm:
$$\|F\|_{\bD^{1,2}}=\big(E|F|^{2} \big)^{1/2}+\big( E\|DF\|_{\cU}^2\big)^{1/2}.$$

So, the Malliavin derivative is an operator $D:\bD^{1,2} \subset L^2(\Omega) \to L^2(\Omega;\cU)$. Let $\delta:{\rm Dom} \ \delta \subset L^2(\Omega;\cU) \mapsto L^2(\Omega)$ be the adjoint of this operator. By duality,
$$E[F \delta(X)]=E[\langle DF,X\rangle_{\cU}] \quad \mbox{for all} \quad F \in \bD^{1,2}.$$

We denote $\delta (X)=\int_0^T X(t)\delta B^H(s)$. $\delta(X)$ is called the {\em divergence integral} of $X$ with respect to $B^H$.

\begin{remark}
{\rm The operator $\delta$ can also be defined for the Brownian motion, in which case it is called {\em the Skorohod integral}. This integral bears Skorohod's name since it appeared in his work \cite{skorohod75} in 1975, but in fact it had been introduced earlier by Hitsuda in a talk given at a Japan-USSR symposium in 1972 (which Skorohod attended); see \cite{hitsuda72}. The Skorohod integral coincides with the It\^o integral for {\em adapted} integrands. So, in the case of the Brownian motion, the Skorohod integral is just an extension of the It\^o integral to a set of non-adapted integrands. On the other hand, it should be emphasized that {\em there is no It\^o integral with respect to the fBm, even if the integrand is adapted}.}
\end{remark}

The operator $\delta$ {\em is not an isometry}! But we have the following useful tool for estimating the second moment of $\delta(X)$: for elements $X$ is a subspace of ${\rm Dom} \ \delta$ (denoted by $\bD^{1,2}(\cU)$),
\begin{align}
\label{iso-delta}
& E\left|\int_0^T X(s)\delta B^H(s)\right|^2 \leq E\|X\|_{\cU}^2 +E\|DX\|_{\cU \otimes \cU}^2\\
\nonumber
& \quad \quad \quad = \alpha_H E\int_0^T \int_0^T X(t)X(s)|t-s|^{2H-2}dtds+\\
\nonumber
& \quad \quad \quad \quad \quad \alpha_H^2 E \int_{[0,T]^4} D_t X(s) D_{t'}X(s')|t-t'|^{2H-2}|s-s'|^{2H-2}dtdt' dsds'.
\end{align}

The following result is the {\em It\^o formula} for the divergence integral with respect to the fBm (see Theorem 5.2.1 of \cite{nualart06}).

\begin{theorem}
\label{ito-fBm}
If $H>1/2$ and $F:[0,\infty) \times \bR \to \bR$ is a function which is continuously differentiable in $t$ and twice continuously differentiable in $x$, then
$$F(t,B_t^H)-F(0,0)=\int_0^t \frac{\partial F}{\partial t}(s,B_s^H)ds+\int_0^t \frac{\partial F}{\partial x}(s,B_s^H)\delta B^H(s)+H\int_0^t \frac{\partial^2 F}{\partial x^2}(s,B_s)s^{2H-1}ds.$$
\end{theorem}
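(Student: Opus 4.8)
The plan is to adapt the Riemann-sum/Taylor-expansion derivation of the classical It\^o formula (Theorem~\ref{Ito-formula}); the only genuinely new feature is that, for $H>1/2$, the Riemann sums approximating the stochastic term converge to the \emph{divergence} integral only after one isolates a trace correction, and that correction is exactly what produces the last term $H\int_0^t\frac{\partial^2 F}{\partial x^2}(s,B^H_s)s^{2H-1}\,ds$. Before starting, note that the statement is meaningful only once $\frac{\partial F}{\partial x}(\cdot,B^H_\cdot)$ belongs to the domain of $\delta$, so I would first prove the formula under the extra hypothesis that $F,\frac{\partial F}{\partial t},\frac{\partial F}{\partial x},\frac{\partial^2 F}{\partial x^2}$ are bounded --- whence, by \eqref{iso-delta}, $\frac{\partial F}{\partial x}(\cdot,B^H_\cdot)\in\bD^{1,2}(\cU)$ and all the $L^2(\Omega)$-estimates below are legitimate --- and afterwards recover the general case by a localization argument, approximating $F$ by $F\chi_n$ with $\chi_n\in C_c^\infty$ and letting $n\to\infty$ with the help of \eqref{iso-delta}.

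Now fix $t>0$ and partitions $0=s_0<s_1<\dots<s_n=t$ of mesh tending to $0$, and telescope
$$F(t,B^H_t)-F(0,0)=\sum_i\Big(F(s_{i+1},B^H_{s_{i+1}})-F(s_i,B^H_{s_{i+1}})\Big)+\sum_i\Big(F(s_i,B^H_{s_{i+1}})-F(s_i,B^H_{s_i})\Big).$$
The first sum equals $\sum_i\int_{s_i}^{s_{i+1}}\frac{\partial F}{\partial t}(r,B^H_{s_{i+1}})\,dr$ and tends, in $L^2(\Omega)$, to $\int_0^t\frac{\partial F}{\partial t}(s,B^H_s)\,ds$ by the a.s.\ uniform continuity of the paths of $B^H$ and dominated convergence. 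Writing $\Delta_iB^H:=B^H_{s_{i+1}}-B^H_{s_i}$ and Taylor-expanding each summand of the second sum in the space variable,
$$F(s_i,B^H_{s_{i+1}})-F(s_i,B^H_{s_i})=\frac{\partial F}{\partial x}(s_i,B^H_{s_i})\,\Delta_iB^H+\frac{1}{2}\frac{\partial^2 F}{\partial x^2}(s_i,B^H_{s_i})(\Delta_iB^H)^2+R_i,\qquad |R_i|\le C|\Delta_iB^H|^3.$$
Since $H>1/2$ one has $E\sum_i(\Delta_iB^H)^2=\sum_i(s_{i+1}-s_i)^{2H}\to0$ and $E\sum_i|\Delta_iB^H|^3=c\sum_i(s_{i+1}-s_i)^{3H}\to0$, so that both $\sum_iR_i$ and $\frac{1}{2}\sum_i\frac{\partial^2 F}{\partial x^2}(s_i,B^H_{s_i})(\Delta_iB^H)^2$ vanish in $L^1(\Omega)$; this is the ``zero quadratic variation'' of $B^H$, which makes the naive second-order term disappear.

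It remains to handle $\sum_i\frac{\partial F}{\partial x}(s_i,B^H_{s_i})\,\Delta_iB^H$, and this is the only place where the fBm structure really enters. By the product rule for the divergence, $G\,B^H(h)=\delta(Gh)+\langle DG,h\rangle_{\cU}$ (a consequence of the duality defining $\delta$ together with the Leibniz rule for $D$), applied with $G=\frac{\partial F}{\partial x}(s_i,B^H_{s_i})$ and $h=1_{(s_i,s_{i+1}]}$, and noting $DG=\frac{\partial^2 F}{\partial x^2}(s_i,B^H_{s_i})\,1_{(0,s_i]}$,
$$\frac{\partial F}{\partial x}(s_i,B^H_{s_i})\,\Delta_iB^H=\delta\!\Big(\frac{\partial F}{\partial x}(s_i,B^H_{s_i})\,1_{(s_i,s_{i+1}]}\Big)+\frac{\partial^2 F}{\partial x^2}(s_i,B^H_{s_i})\,\big\langle 1_{(0,s_i]},1_{(s_i,s_{i+1}]}\big\rangle_{\cU}.$$
Summing over $i$, the divergence terms combine (by linearity and the continuity/closedness of $\delta$ on $\bD^{1,2}(\cU)$) into $\delta$ of the step process $\sum_i\frac{\partial F}{\partial x}(s_i,B^H_{s_i})1_{(s_i,s_{i+1}]}$, which converges in $\bD^{1,2}(\cU)$ to $\frac{\partial F}{\partial x}(\cdot,B^H_\cdot)$; hence this part tends to $\int_0^t\frac{\partial F}{\partial x}(s,B^H_s)\,\delta B^H(s)$. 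For the trace part, by \eqref{fBm-cov} (the inner product of $\cU$ carries the factor $\alpha_H=H(2H-1)$),
$$\big\langle 1_{(0,s_i]},1_{(s_i,s_{i+1}]}\big\rangle_{\cU}=\alpha_H\int_0^{s_i}\!\int_{s_i}^{s_{i+1}}|u-v|^{2H-2}\,du\,dv=H\,s_i^{2H-1}(s_{i+1}-s_i)+o(s_{i+1}-s_i),$$
so that $\sum_i\frac{\partial^2 F}{\partial x^2}(s_i,B^H_{s_i})\big\langle 1_{(0,s_i]},1_{(s_i,s_{i+1}]}\big\rangle_{\cU}\to H\int_0^t\frac{\partial^2 F}{\partial x^2}(s,B^H_s)\,s^{2H-1}\,ds$. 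Collecting the four limits yields the identity.

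I expect the main obstacle to lie in that last paragraph: first, showing that the step processes converge to $\frac{\partial F}{\partial x}(\cdot,B^H_\cdot)$ in the $\bD^{1,2}(\cU)$-norm, which involves not only the $\cU$-valued approximation but also the convergence of the $\cU\otimes\cU$-valued Malliavin derivatives and hence the double-integral term in \eqref{iso-delta}; and second, showing that the error in $\big\langle 1_{(0,s_i]},1_{(s_i,s_{i+1}]}\big\rangle_{\cU}\approx H s_i^{2H-1}(s_{i+1}-s_i)$ sums to zero uniformly over the partition, which needs some care near $s=0$ where $s\mapsto s^{2H-1}$ is not Lipschitz. An alternative that trades this bookkeeping for Wiener-chaos bookkeeping is to verify the identity first for $F(t,x)=e^{\lambda x}$ --- equivalently, by Theorem~\ref{int-Hermite}, for Hermite polynomials evaluated at $B^H_t$ --- by differentiating the chaos expansion term by term, and then extend to a general $F$ by a Fourier/approximation argument.
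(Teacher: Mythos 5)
These notes do not prove Theorem \ref{ito-fBm} at all: it is quoted from Theorem 5.2.1 of \cite{nualart06}, so there is no internal argument to compare yours with. Your discretization scheme is, in substance, the standard proof from the fBm literature: telescope in time and space, Taylor-expand, use that $E\sum_i(\Delta_iB^H)^2=\sum_i(s_{i+1}-s_i)^{2H}\to 0$ for $H>1/2$ to kill the second-order term, and convert the forward sums into divergences via $G\,B^H(h)=\delta(Gh)+\langle DG,h\rangle_{\cU}$, the trace terms $\langle 1_{(0,s_i]},1_{(s_i,s_{i+1}]}\rangle_{\cU}$ producing the correction $H\int_0^t\frac{\partial^2F}{\partial x^2}(s,B^H_s)s^{2H-1}ds$. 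That skeleton is correct, and the two obstacles you flag are the right ones. Two of your technical points can be cleaned up on the spot: with $F$ only twice continuously differentiable in $x$, the bound $|R_i|\le C|\Delta_iB^H|^3$ is not available, but writing $R_i=\frac12\big[\frac{\partial^2F}{\partial x^2}(s_i,\xi_i)-\frac{\partial^2F}{\partial x^2}(s_i,B^H_{s_i})\big](\Delta_iB^H)^2$ and using uniform continuity on compacts still gives $\sum_iR_i\to0$; and the worry about $s^{2H-1}$ near $0$ disappears if you use the exact identity $\langle 1_{(0,s_i]},1_{(s_i,s_{i+1}]}\rangle_{\cU}=\frac12\big(s_{i+1}^{2H}-s_i^{2H}-(s_{i+1}-s_i)^{2H}\big)$, since the first difference yields a Riemann--Stieltjes sum for $\frac12\int_0^t\frac{\partial^2F}{\partial x^2}(s,B^H_s)\,d(s^{2H})$ with the $C^1$ integrator $s^{2H}$, while the last term is $o(s_{i+1}-s_i)$ uniformly.

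The one genuinely soft spot is the final localization step. Passing from bounded derivatives to general $C^{1,2}$ functions by multiplying with cutoffs $\chi_n$ is not routine: the issue is precisely to show $\frac{\partial F}{\partial x}(\cdot,B^H_\cdot)\in{\rm Dom}\,\delta$ and to identify the limit of the localized divergences, and plain $L^2$ convergence via \eqref{iso-delta} does not give this for free. This is why the cited Theorem 5.2.1 of \cite{nualart06} imposes a growth condition of Gaussian-exponential type on $F$ and its derivatives (with a constant $\lambda$ small relative to $t^{-2H}$), and why the divergence integral there is understood in a localized sense; the statement as reproduced in these notes silently suppresses that hypothesis (note also that the last term should read $\frac{\partial^2F}{\partial x^2}(s,B^H_s)$). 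Under your standing boundedness assumption the argument is complete once you verify the $\bD^{1,2}(\cU)$-convergence of the step processes, which follows from dominated convergence in the $|\cU|$-norm for the processes themselves and for their explicitly computable Malliavin derivatives.
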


\subsection{SDE with fractional noise}

Assume that $H>1/2$.
Consider the stochastic differential equation (SDE):
\begin{equation}
\label{SDE-fBm}
dX(t)=\sigma(X(t)) \delta B_t^{H}, \quad X(0)=0,
\end{equation}
where $\sigma$ is a globally Lipschitz function. The solution of \eqref{SDE-fBm} satisfies
$$X(t)=\int_0^t \sigma(X(s))\delta B^H(s),$$
provided that the stochastic integral is well-defined. 

Solving equation \eqref{SDE-fBm} for a general function $\sigma$ has remained an open problem for the last 20 years. To see where the difficulty comes from, let's try to set-up a Picard's iterations scheme, as in the case of the Brownian motion: $X_0(t)=0$ and for any $n \geq 0$,
$$X_{n+1}(t)=\int_0^t \sigma(X_n(s))\delta B^H(s).$$
Assume that $\sigma$ is differentiable, and $|\sigma(x)| \leq C$ and $|\sigma'(x)|\leq C'$ for all $x \in \bR$.
By \eqref{iso-delta},
\begin{align*}
E|X_{n+1}(t)|^2 & \leq \alpha_H E\int_0^t \int_{0}^t \sigma(X_n(s)) \sigma(X_n(s'))|s-s'|^{2H-2}dtdt'+\\
& \alpha_H^2\int_{[0,T]^4} D_t \big( \sigma(X_n(s)) \big) D_{t'}\big(\sigma(X_{n}(s')) \big) |t-t'|^{2H-2}|s-s'|^{2H-2} dtdt' ds ds' \\
& \leq \alpha_H C^2 E\int_0^t \int_{0}^t |X_n(s)| |X_n(s')||s-s'|^{2H-2}dtdt'+\\
& \alpha_H^2 C'{^2} E \int_{[0,T]^4} |D_t X_n(s)| |D_{t'} X_{n}(s')|  |t-t'|^{2H-2}|s-s'|^{2H-2} dtdt' ds ds',
\end{align*}
using the fact that $D_t \big(\sigma(X_n(s))\big) =\sigma'(X_n(s))D_t X_n(s)$. But it is not clear how to estimate this further.

\subsection{Particular case: SDE with multiplicative fractional noise}

Consider the particular case $\sigma(x)=x$ (but with non-vanishing initial conditions). More precisely, we look at the equation:
\begin{equation}
\label{SDE-fBm2}
dX(t)=X(t)\delta B_t^H, \quad X_0=1.
\end{equation}
The solution of this equation satisfies:
$$X(t)=1+\int_0^t X(s)\delta B^H(s),$$
provided that the stochastic integral is well-defined.

\begin{theorem}
\label{exist-sol-FBM}
Equation \eqref{SDE-fBm2} has a unique solution, the geometric fBm:
$$X(t)=\exp(B_t^H-t^{2H}/2).$$
\end{theorem}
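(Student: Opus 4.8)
The plan is to follow the series-expansion argument used to prove Theorem~\ref{exist-sol-BM}, whose very purpose was to be portable to Gaussian drivers other than Brownian motion. Assuming a square-integrable solution exists, inserting $X(s)=1+\int_0^s X(r)\,\delta B^H(r)$ into the integral equation and iterating produces the formal series $X(t)=1+\sum_{n\ge 1}J_n^{B^H}(t)$, where $J_n^{B^H}(t)$ is the iterated divergence integral of the constant $1$ over the simplex $\{0<t_1<\dots<t_n<t\}$. Because the innermost integrand is deterministic, each integrand occurring along the iteration is a multiple integral of a deterministic kernel, hence every iterated divergence integral is legitimate, and by symmetry of the kernels one gets $J_n^{B^H}(t)=\tfrac1{n!}I_n^{B^H}(t)$ with $I_n^{B^H}(t)=I_n(1_{[0,t]}^{\otimes n})$ the multiple integral of order $n$ with respect to $B^H$, exactly as in the Brownian case.

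Next I would establish convergence of this series in $L^2(\Omega)$ and identify its sum. The multiple integrals $I_n$, $n\ge 0$, have pairwise orthogonal ranges, and $E|I_n(1_{[0,t]}^{\otimes n})|^2=n!\,\|1_{[0,t]}\|_{\cU}^{2n}=n!\,t^{2nH}$ since $\|1_{[0,t]}\|_{\cU}^2=R_H(t,t)=t^{2H}$. Therefore $\sum_{n\ge 1}E|J_n^{B^H}(t)|^2=\sum_{n\ge 1}t^{2nH}/n!=e^{t^{2H}}-1<\infty$, so the partial sums form a Cauchy sequence in $L^2(\Omega)$. Applying Theorem~\ref{int-Hermite} with $W=B^H$, $\cH=\cU$ and $h=1_{[0,t]}$ gives $I_n^{B^H}(t)=t^{nH}H_n(B_t^H/t^H)$, and the Hermite generating function \eqref{Hermite}, used with parameter $t^H$ and argument $B_t^H/t^H$, yields
$$X(t)=1+\sum_{n\ge 1}\frac1{n!}\,t^{nH}H_n\!\left(\frac{B_t^H}{t^H}\right)=\exp\!\left(B_t^H-\frac{t^{2H}}{2}\right).$$

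It then remains to check that this explicit process genuinely solves \eqref{SDE-fBm2} and that it is the only solution. For the first point the neatest route is the It\^o formula for the divergence integral, Theorem~\ref{ito-fBm}, applied to $F(t,x)=e^{x-t^{2H}/2}$: here $\partial_tF=-Ht^{2H-1}F$ and $\partial_xF=\partial_{xx}F=F$, so the term $\int_0^t\partial_tF(s,B_s^H)\,ds$ cancels the fractional correction term $H\int_0^t\partial_{xx}F(s,B_s^H)\,s^{2H-1}\,ds$, leaving $X(t)-1=\int_0^tX(s)\,\delta B^H(s)$. For uniqueness I would argue at the level of Wiener chaos: any square-integrable solution $Y(t)$ for which the divergence integral is defined has an expansion $Y(t)=\sum_{n\ge 0}I_n(g_n(\cdot,t))$ with symmetric kernels $g_n(\cdot,t)\in\cU^{\otimes n}$; substituting into the equation and matching the $n$th chaos forces $g_0(t)\equiv 1$ and determines $g_n(\cdot,t)$ recursively from $g_{n-1}$, so $g_n(\cdot,t)$ must equal $\tfrac1{n!}1_{[0,t]}^{\otimes n}$, the kernel of the series above; hence $Y(t)=X(t)$ a.s.\ for every $t$.

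The main obstacle is that, in contrast with Theorem~\ref{sol-SDE-th}, the divergence integral is \emph{not} an isometry, so the Gronwall comparison that settled uniqueness in the Brownian case is not available and one really must pass through the chaos decomposition; likewise, the cleanest verification that the candidate solves the equation uses the It\^o formula of Theorem~\ref{ito-fBm} rather than a direct estimate on the iteration. One should also be a little careful that the iterated divergence integrals defining $J_n^{B^H}(t)$ make sense at all, which is ensured here only because every kernel involved is deterministic.
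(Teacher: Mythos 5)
Your proposal is correct and follows essentially the same route as the paper: the series expansion $X(t)=1+\sum_{n\ge 1}\frac{1}{n!}I_n^{B^H}(t)$ evaluated through Theorem~\ref{int-Hermite} and the Hermite generating function \eqref{Hermite} (your normalization $I_n^{B^H}(t)=t^{nH}H_n(B_t^H/t^H)$ is the correct one; the paper's $t^{n/2}$ is a typo). The additional material you supply --- the $L^2(\Omega)$ convergence bound $\sum_{n\ge 1}t^{2nH}/n!$, the verification via the It\^o formula of Theorem~\ref{ito-fBm} that the candidate actually solves \eqref{SDE-fBm2}, and uniqueness by matching Wiener chaos kernels --- fills in steps the paper leaves implicit, and is consistent with its argument.
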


\begin{proof}
Similarly to the Brownian motion case, there are two methods to prove this result. The first method consists in applying the It\^o formula for the divergence integral (Theorem \ref{ito-fBm}) to the function $F(t,x)=e^{x-t^{2H}/2}$. We explain the second method, which consists in writing the series expansion of the solution. As in the proof of Theorem \ref{exist-sol-BM}, it can be shown that, if it exists, the solution of \eqref{SDE-fBm2} has the following series expansion in $L^2(\Omega)$:
$$X(t)=1+\sum_{n \geq 1}\frac{1}{n!}I_n^{B^H}(t),$$
where $I_n^{B^H}(t)$ is the multiple integral of order $n$:
$$I_n^{B^H}(t)=\int_{[0,t]^n}1 dB^H(t_1) \ldots dB^H(t_n).$$
Unlike Theorem \ref{exist-sol-BM}, here it is not clear how to define an iterated integral with respect to $B^H$. But still, the multiple integrals $I_n^{B^H}(t)$ can be calculated using Hermite polynomials. Using Theorem \ref{int-Hermite} with $W=B^H$, $\cH=\cU$ and $h=1_{[0,t]}$ (with $\|h\|_{\cU}^2=t^{2H}$), we see that
$$I_n^{B^H}(t)=t^{n/2}H_n\left(\frac{B_t^H}{t^{H}} \right).$$

Hence, 
$$X(t)=1+\sum_{n\geq 1} \frac{1}{n!} I_{n}^{B^H}(t)=1+\sum_{n\geq 1}\frac{1}{n!} t^{2H} H_n\left(\frac{B_t^{H}}{t^H} \right)=\exp(B_t^H-t^{2H}/2).$$
\end{proof}

Note that for any $p>0$, $E[X(t)^p]=\exp(\frac{p(p-1)}{2} t^{2H})$ and hence the Lyapunov exponent of the solution $\{X(t)\}_{t>0}$ is infinte:
$$\lambda_p=\lim_{t \to \infty}\frac{1}{t} \log E[X(t)^p]=\infty.$$
But we can consider a {\em modified Lyapunov exponent} (or order $2H$), which is finite:
$$\lambda_p^{(H)}=\lim_{t \to \infty} \frac{1}{t^{2H}}\log E[X(t)^p]=\frac{p(p-1)}{2}.$$

\section{SPDEs with space-time homogeneous Gaussian noise}
\label{section-SPDE}

In this section, we introduce a new model for the noise perturbing an SPDE, and we give a summary of the known results for the heat and wave equations with this type of noise.

\subsection{The fractional-colored noise: a spin-off from Dalang's theory}

Taking a hint from the form \eqref{fBm-cov} of the covariance of the fBm with index $H>1/2$ and the covariance \eqref{Dalang-cov} of the spatially homogeneous Gaussian noise, we consider a zero-mean Gaussian process $F=\{F(A);A \in \cB_b(\bR_{+} \times \bR^d)\}$ with covariance:
$$E[F(A)F(B)]=\int_{(\bR_{+} \times \bR^d)^2} 1_{A}(t,x) 1_{B}(s,y)\gamma(t-s)f(x-y)dt dx ds dy=:\langle 1_{A},1_{B} \rangle_{\cH},$$
where $f$ is the Fourier transform of a tempered measure $\mu$ on $\bR^d$ and $\nu$ is the Fourier transform of a tempered measure $\nu$ on $\bR$. Using an expression of the inner product in terms of Fourier transforms, it can be shown that $\langle \cdot,\cdot \rangle_{\cH}$ is non-negative definite.

We say that $F$ is a {\em space-time homogeneous Gaussian noise}. It was introduced in \cite{BT-ALEA}, in the case $\gamma(t)=|t|^{2H-2}$ with $H \in (\frac{1}{2},1)$, where it was called ``fractional-colored'' noise.

For any $t>0$ and $A \in \cB_b(\bR^d)$, we let $F_t(A)=F([0,t] \times A)$. Then
$$E[F_t(A)F_s(B)]=R(t,s) \int_{A} \int_{B}f(x-y)dxddy,$$
where $R(t,s)=\int_0^t \int_0^s \gamma(u-v)dudv=E[Z_t Z_s]$ is the covariance of a process $(Z_t)_{t\geq 0}$ with stationary increments (with spectral measure $\nu$).
The typical examples that we are interested in are: $f(x)=|x|^{-\alpha}$ with $0<\alpha<d$
(which appeared in Dalang's work; see Remark \ref{Dalang-rem}) and
$\gamma(t)=|t|^{2H-2}$ with $1/2<H<1$ (in which case the process $(Z_t)_{t\geq 0}$ is a fBm of index $H$; see Remark \ref{Fourier-rem}).

In general, $\{F_t(A);t \geq 0,A \in \cB_b(\bR^d)\}$ may not be a martingale measure. So, in the case of random integrands, we cannot use Walsh's approach for defining a stochastic integral with respect to $F$.
But for deterministic integrands, we can use It\^o's approach. If $\varphi=1_{A}$ for some $A \in \cB_b(\bR_{+} \times \bR^d)$, we let $I^F(\varphi)=F(A)$. We extend this definition to the space $\cE(\bR_{+} \times \bR^d)$ of linear combinations of indicator functions of this form. For such functions,
$$E{I^{F}(\varphi)I^{F}(\psi)}=\int_{(\bR_{+} \times \bR^d)^2} \varphi(t,x)\psi(s,y)\gamma(t-s)f(x-y)dtdx ds dy=:\langle \varphi,\psi \rangle_{\cH}.$$
This shows that the map $\varphi \mapsto I^F(\varphi) \in L^2(\Omega)$ is an isometry, which we extend to the Hilbert space $\cH$ defined as the completion of $\cE(\bR_{+} \times \bR^d)$ with respect to the inner product $\langle \cdot,\cdot \rangle_{\cH}$.
The space $\cH$ may contain distributions. For any $\varphi \in \cH$, we denote
$$I^F(\varphi)=\int_{0}^{\infty}\int_{\bR^d}\varphi(t,x)F(dt,dx),$$
and we say that $I^F(\varphi)$ is {\em the stochastic integral} of $\varphi$ with respect to $F$.

Note that $\{F(\varphi)\}_{\varphi \in \cH}$ is an isonormal Gaussian process, corresponding to the Hilbert space $\cH$.
To define the stochastic integral in the case of random integrands, we consider the divergence operator $\delta^F$ from Malliavin calculus, which is defined as the adjoint of the Malliavin derivative, as in Section \ref{integral-fBm} above (see \cite{nualart06}). For any $X \in {\rm Dom} \ \delta^F$, we denote
$$\delta^F(X)=\int_0^{\infty}\int_{\bR^d}X(t,x)F(\delta t,\delta x).$$

\subsection{Again the linear equation}

We consider the equation
\begin{equation}
\label{SPDE-lin}
Lu(t,x)=\dot{F}(t,x) \quad t>0,x \in \bR^d,
\end{equation}
with zero initial conditions, where $L$ is a second-order partial differential operator with constant coefficients. As in Section \ref{lin-eq-white} below, let $G$ be the fundamental solution of $L$. By definition, the {\em mild solution} to \eqref{SPDE-lin} is given by:
$$u(t,x)=\int_{0}^{t}\int_{\bR^d}G(t-s,x-y)F(ds,dy),$$
provided that the stochastic integral is well-defined, i.e. $G(t-\cdot,x-\cdot)1_{[0,t]}(\cdot) \in \cH$.

The case $\gamma(t)=|t|^{2H-2}$ with $1/2<H<1$ was examined in \cite{BT-SPA}, where it was shown that:
\begin{itemize}
\item If $L=\frac{\partial}{\partial t}-\frac{1}{2}\Delta$ (heat operator), \eqref{SPDE-lin} has a random field solution if and only if
    \begin{equation}
    \label{H-cond-heat}
    \int_{\bR^d}\left(\frac{1}{1+|\xi|^2} \right)^{2H}\mu(d\xi)<\infty.
    \end{equation}
    In particular, if $f(x)=|x|^{-\alpha}$ with $\alpha \in (0,d)$, \eqref{H-cond-heat} becomes $\alpha<4H$.

\item If $L=\frac{\partial^2}{\partial t^2}-\frac{1}{2}\Delta$ (wave operator), \eqref{SPDE-lin} has a random field solution if and only if
    \begin{equation}
    \label{H-cond-wave}
    \int_{\bR^d}\left(\frac{1}{1+|\xi|^2} \right)^{H+1/2}\mu(d\xi)<\infty.
    \end{equation}
    In particular, if $f(x)=|x|^{-\alpha}$ with $\alpha \in (0,d)$, \eqref{H-cond-heat} becomes $\alpha<2H+1$.
\end{itemize}

Note that when $H=1/2$, \eqref{H-cond-heat} and \eqref{H-cond-wave} coincide with Dalang's condition \eqref{Dalang-cond}. So, formally we can say that Dalang's work \cite{dalang99} covers the case $H=1/2$. Since $H>1/2$, \eqref{H-cond-wave} is stronger than \eqref{H-cond-heat}.

\begin{remark}
{\rm
In \cite{B-JFAA}, these results were extended to a general Gaussian noise with covariance determined by some tempered measures $\nu$ on $\bR$ and $\mu$ on $\bR^d$, whose Fourier transforms are not necessarily locally integrable functions. In particularly, this covers the case when the noise behaves in time like a fBm with $H<1/2$. More precisely, in \cite{B-JFAA} it was shown in that:
\begin{itemize}
\item if $L=\frac{\partial}{\partial t}-\frac{1}{2}\Delta$ (heat operator), \eqref{SPDE-lin} has a random field solution if and only if
    $$\int_{\bR^d}\int_{\bR}\frac{1}{1+\tau^2+|\xi|^4} \nu(d\tau) \mu(d\xi)<\infty;$$

\item if $L=\frac{\partial^2}{\partial t^2}-\frac{1}{2}\Delta$ (wave operator), \eqref{SPDE-lin} has a random field solution if and only if
    $$\int_{\bR^d}\frac{1}{\sqrt{1+|\xi|^2}}\int_{\bR}\frac{1}{1+\tau^2+|\xi|^2} \nu(d\tau) \mu(d\xi)<\infty.$$
\end{itemize}
}
\end{remark}

\subsection{What we know so far about some non-linear equations}

Consider the equation
\begin{equation}
\label{SPDE-nonlin}
Lu(t,x)=u(t,x)\dot{F}(t,x) \quad t>0,x \in \bR^d,
\end{equation}
with initial condition $1$, where $L$ is the heat or wave operator on $\bR_{+} \times \bR^d$. By definition, a {\em mild Skorohod solution} to \eqref{SPDE-nonlin} satisfies:
$$u(t,x)=1+\int_0^t \int_{\bR^d}G(t-s,x-y)u(s,y)F(\delta s,\delta y),$$
provided that the stochastic integral is well-defined (as the divergence integral from Malliavin calculus). The word ``Skorohod'' is used to distinguish it from the mild Stratonovich solution, for which the stochastic integral is understood in a different sense. Here $G$ is the fundamental solution of the heat or wave operator.

We include below a summary of the known results for the heat equation:
\begin{equation}
\label{PAM}
\frac{\partial u}{\partial t}(t,x)=\frac{1}{2}\Delta u(t,x)+u(t,x)\dot{F}(t,x) \quad t>0,x \in \bR^d
\end{equation}
with initial condition $u(0,x)=1$. Equation \eqref{PAM} is called the {\em Parabolic Anderson Model} with space-time homogeneous Gaussian noise.

Items {\bf (a)}-{\bf (d)} below were obtained by the Hu-Nualart group in a series of three papers: \cite{hu-nualart09,HNS11,HHNT}. Item {\bf (e)} was proved in \cite{B-ECP}. Item {\bf (f)} is taken from the recent preprints \cite{BQS,hu-le}.

\begin{description}

\item[(a)] ({\em Existence and uniqueness}) Under Dalang's condition \eqref{Dalang-cond}, equation \eqref{PAM} has a unique mild Skorohod solution and this solution has the series expansion:
    \begin{equation}
    \label{series-exp}
    u(t,x)=1+\sum_{n\geq 1}I_n^F(f_n(\cdot,t,x))
    \end{equation}
    where $I_n^F$ is the multiple integral of order $n$ with respect to $F$ (defined as in  \cite{nualart06}, using Malliavin calculus), and the function $f_n(\cdot,t,x)$ is given by relation \eqref{def-fn} with $G$ the fundamental solution of the heat operator.

\item[(b)] ({\em Feynman-Kac (FK) representation of moments}) Under Dalang's condition \eqref{Dalang-cond}, the moments of the solution $u$ of \eqref{PAM} have the following stochastic representation:
    $$E[u(t,x)^k]=E\left[\exp \left\{ \sum_{1 \leq i<j\leq k} \int_0^t \int_0^t \gamma(r-s) f(B_r^i-B_s^j)drds\right\} \right],$$
    for any integer $k\geq 2$, where $B^1,\ldots, B^k$ are i.i.d. $d$ dimensional Brownian motions, independent of $W$. Moreover, $u(t,x)$ is the limit in $L^k(\Omega)$ (hence in probability) of a sequence of non-negative random variables, and therefore $u(t,x)\geq 0$ a.s.

\item[(c)] ({\em Intermittency}) If $\gamma(t)=|t|^{2H-2}$ for some $H \in (\frac{1}{2},1)$ and $f(x)=|x|^{-\alpha}$ for some $\alpha \in (0,d \wedge 2)$, then for any $p>0$,
    $$\exp\big(c_1 p^{\frac{4-\alpha}{2-\alpha}} t^{\rho}\big) \leq E[u(t,x)^p] \leq \exp\big(c_2 p^{\frac{4-\alpha}{2-\alpha}} t^{\rho}\big),$$
    where $c_1>0,c_2>0$ are constants independent of $t,x$ and $p$, and
    $$\rho=\frac{4H-\alpha}{2-\alpha}.$$

\item[(d)] ({\em FK representation of the solution}) If $\gamma(t) \leq c|t|^{2H-2}$ for all $t \in \bR$, for some $H \in (\frac{1}{2},1)$ and $c>0$, and
    \begin{equation}
    \label{FK-cond}
    \int_{\bR^d}\left( \frac{1}{1+|\xi|^2}\right)^{2H-1}\mu(d\xi)<\infty,
    \end{equation}
    then
   $$u(t,x)=E\Big[\exp\Big(W(A_{t,x}^B)-\frac{1}{2}\|A_{t,x}^B\|_{\cH}^2 \Big) \Big],$$
   where $A_{t,x}^B $ is a random variable in $L^2(\Omega;\cH)$, which is measurable with respect to a $d$-dimensional Brownian motion $B=(B_t)_{t\geq 0}$, starting at $x$ and independent of $W$.

\item[(e)] ({\em Another FK representation of the second moment}) Under Dalang's condition \eqref{Dalang-cond},
    $$E[u(t,x)^2]=1+e^{t^2} \sum_{n\geq 1} \, \, \sum_{i_1,\ldots,i_n \ distinct} E\Big[\prod_{j=1}^n \gamma(T_{i_j}-S_{i_j}) \prod_{j=1}^{n}f(B_{T_{i_j}}^1 -B_{S_{i_J}}^2)1_{A_{i_1,\ldots,i_n}(t)} \Big],$$
    where $\{P_i=(T_i,S_i), i\geq 1\}$ are the points of a Poisson random measure $N$ on $\bR_{+}^2$ with intensity given by the Lebesque measure, $A_{i_1,\ldots,i_n}(t)$ is the event that $N$ has points $P_{i_1},\ldots,P_{i_n}$ in the set $[0,t]^2$, and $B^1,B^2$ are independent Brownian motions on $\bR^d$ starting at $x$, which are independent of $N$.

\item[(e)] ({\em H\"older continuity}) If
\begin{equation}
\label{Holder-cond}
\int_{\bR^d} \left(\frac{1}{1+|\xi|^2} \right)^{\eta}\mu(d\xi)<\infty
\end{equation}
for some $\eta \in (0,1)$, then for any $p \geq 2$, $t,t' \in [0,T]$ and $x,x' \in \bR^d$,
$$\|u(t,x)-u(t',x')\|_p \leq C_T\Big(|t-t'|^{\frac{1-\eta}{2}}+|x-x'|^{1-\eta} \Big),$$
where $\|\cdot\|_p$ is the norm in $L^p(\Omega)$ and $C_T>0$ is a constant depending on $T$. By Kolmogorov's criterion, $u$ has a H\"older continuous modification of order $\frac{1-\eta}{2}-\varepsilon$ in time and order $1-\eta-\varepsilon$ in space, for any $\varepsilon>0$.
In particular, if $f(x)=|x|^{-\alpha}$ for some $\alpha \in (0,d \wedge 2)$, condition \eqref{Holder-cond} holds for any $\eta \in (\frac{\alpha}{2},1)$ and $u$ has a H\"older continuous modification of order $\frac{1}{2}(1-\frac{\alpha}{2})-\varepsilon$ in time and order $1-\frac{\alpha}{2}-\varepsilon$ in space. If in addition, $\gamma(t)=|t|^{2H-2}$ for some $H \in (\frac{1}{2},1)$, then $u$ has a H\"older continuous modification of order $\frac{1}{2}(2H-\frac{\alpha}{2})-\varepsilon$ in time and order $2H-\frac{\alpha}{2}-\varepsilon$ in space.

\end{description}

We consider now the case of the wave equation:
\begin{equation}
\label{HAM}
\frac{\partial^2 u}{\partial t^2}(t,x)=\Delta u(t,x)+u(t,x)\dot{F}(t,x) \quad t>0,x \in \bR^d
\end{equation}
with initial condition $u(0,x)=1$ and $\frac{\partial u}{\partial t}(0,x)=0$. Equation \eqref{HAM} is called the {\em Hyperbolic Anderson Model}. There are fewer results for this equation in the literature. We list these results below. Items {\bf [a,d]} are taken from \cite{balan-song}, and items {\bf [b,c]} from \cite{BC-AOP}.

\begin{description}

\item[(a)] ({\em Existence and Uniqueness}) Under Dalang's condition \eqref{Dalang-cond}, equation \eqref{HAM} has a unique mild Skorohod solution for any $d\geq 1$, and this solution has the same series expansion \eqref{series-exp} as in the case of the Parabolic Anderson Model, but with $G$ replaced by the fundamental solution of the wave operator.

\item[(b)] ({\em FK representation of the second moment}) Under Dalang's condition \eqref{Dalang-cond}, if $d\leq 3$,
    \begin{align*}
    E[u(t,x)^2]& =1+\sum_{n\geq 1}\, \, \sum_{i_1,\ldots,i_n \ distinct} E\left[ \prod_{j=1}^{n}\gamma(T_{i_j}-S_{i_j})\prod_{j=1}^{n}f(X_{T_{i_j}}^1-X_{S_{i_j}}^2)
     \right. \\
    & \qquad \qquad \qquad \qquad \qquad \qquad  \left. \prod_{j=1}^{n}(\tau_j-\tau_{j-1}) \prod_{j=1}^n (\tau_j'-\tau_{j-1}') 1_{A_{i_1 \ldots i_n}(t)}\right],
    \end{align*}
    where $\{P_i=(T_i,S_i), i\geq 1\}$ are the points of a Poisson random measure $N$ on $\bR_{+}^2$ with intensity given by the Lebesque measure, $A_{i_1,\ldots,i_n}(t)$ is the event that $N$ has points $P_{i_1},\ldots,P_{i_n}$ in the set $[0,t]^2$, $\tau_1<\ldots < \tau_n$ and $\tau_1'<\ldots < \tau_n'$ are the points $T_{i_1},\ldots,T_{i_n}$, respectively $S_{i_1},\ldots,S_{i_n}$ arranged in increasing order. In this representation, the processes $X^1=(X^1_s)_{s \in [0,t]}$ and $X^2=(X_s^2)_{s \in [0,t]}$ are defined as follows: $X_0^1=X_0^2=x$ and for any $i=0,\ldots,n$,
    \begin{align*}
    X_{s}^1 &=X_{\tau_i}^1+(t-\tau_i)\Theta_i^1 \quad \mbox{if $\tau_i\leq s<\tau_{i+1}$}\\
    X_{s}^2 &=X_{\tau_i'}^2+(t-\tau_i')\Theta_i^2 \quad \mbox{if $\tau_i'\leq s<\tau_{i+1}'$},
    \end{align*}
    where $\tau_0=0,\tau_{n+1}=t$, and $\Theta_i^1,\Theta_i^2,i\geq 1$ are i.i.d. random variables with values in $\bR^d$, independent of $N$ and with density function (or distribution, if $d=3$) $G(1,\cdot)$.

\item[(c)] ({\em Intermittency}) Suppose that $\gamma(t)=|t|^{2H-2}$ for some $H \in (\frac{1}{2},1)$ and $f(x)=|x|^{-\alpha}$ for some $\alpha \in (0,d \wedge 2)$. Then for any $p\geq 2$,
    $$E|u(t,x)|^p \leq C_0^p \exp \Big(C_1 p^{\frac{4-\alpha}{3-\alpha}} t^{\rho} \Big) \quad \mbox{and} \quad E|u(t,x)|^2 \geq C_2 \exp \Big(C_3 p^{\frac{4-\alpha}{3-\alpha}} t^{\rho} \Big),$$
  where $C_0, \ldots,C_3$ are positive constants independent of $t,x$ and $p$, and
   $$\rho=\frac{2H+2-\alpha}{3-\alpha}.$$
   (The lower bound for $E|u(t,x)|^p$ with $p>2$ is an open problem.)

\item[(d)] ({\em H\"older continuity}) If \eqref{Holder-cond} holds,
then for any $p \geq 2$, $t,t' \in [0,T]$ and $x,x' \in \bR^d$,
$$\|u(t,x)-u(t',x')\|_p \leq C_T\Big(|t-t'|^{1-\eta}+|x-x'|^{1-\eta} \Big),$$
where $\|\cdot\|_p$ is the norm in $L^p(\Omega)$ and $C_T>0$ is a constant depending on $T$. By Kolmogorov's criterion, $u$ has a H\"older continuous modification of order $1-\eta-\varepsilon$ in time and space, for any $\varepsilon>0$.
In particular, if $f(x)=|x|^{-\alpha}$ for some $\alpha \in (0,d \wedge 2)$, condition \eqref{Holder-cond} holds for any $\eta \in (\frac{\alpha}{2},1)$ and $u$ has a H\"older continuous modification of order $1-\frac{\alpha}{2}-\varepsilon$ in time and space.

\end{description}

{\bf Acknowledgement:} The author would like to thank Vojkan Jaksic for the invitation to give these lectures, Armen Shirikyan and Vahagn Nersesyan for useful discussions, and CRM for the hospitality.

\end{document}